\renewcommand{\@biblabel}[1]{\quad#1.}
\begin{document}
\vspace*{0.35in}

\begin{flushleft}
{\Large
\textbf\newline{A Comparison of Overconvergent Witt de-Rham Cohomology and Rigid Cohomology on Smooth Schemes}
}
\newline
\\
Nathan Lawless

\end{flushleft}

\section*{Abstract}
We generalize the functorial quasi-isomorphism in \cite{Davis2011} from overconvergent Witt de-Rham cohomology to rigid cohomology on smooth varieties over a finite field $k$, dropping the quasi-projectiveness condition. We do so by constructing an \et ale hypercover for any smooth scheme $X$, refined at each level to be a disjoint union of open standard smooth subschemes of $X$. We then find, for large $N$, an $N$-truncated closed embedding into a simplicial smooth scheme over $W(k)$, which allows us to use the results of \textit{loc. cit} at the simplicial level, and use cohomological descent to prove the comparison.

\section{Introduction}

Let $X$ be a smooth scheme over a perfect field $k$ of characterstic $p>0$, and consider its overconvergent de Rham-Witt complex of Zariski sheaves $W^\dagger\Omega^\bullet_{X/k}$, which is  defined in \cite{Davis2011} (see Definition 1.1 and Theorem 1.8). One of the main results of \textit{loc. cit} is that if $X$ is also quasi-projective, then there exists a natural quasi-isomorphism
$$\Rig(X/K)\stackrel{\sim}{\to} R\Gamma(X,W^\dagger\Omega^\bullet_{X/k})_\Q,$$
where $K=W(k)\otimes \Q$. 

The main result of this paper is Theorem \ref{comparison theorem}, where we drop the quasi-projectivity condition in the comparison. We outline the approach in \cite{Davis2011} and the one used in our paper.

If $X=\Spec A$, \cite{Davis2011} consider pairs $(X,F)$ given by closed immersions 
$$X=\Spec A\into F=\Spec \tilde{A}$$
into $W(k)$-schemes, called special frames. To this, the authors associate dagger spaces (in the sense of \cite{Grosse-Klonne2000}) $]X[_{\Fa}^\dagger$ functorially in $(X,F)$, which calculate $\Rig(X/K)$:
\begin{align}\label{dagger to rigid1}\Rig(X/K)\stackrel{\sim}{\to} R\Gamma(]X[_{\Fa}^\dagger,\Omega^\bullet_{]X[_{\Fa}^\dagger}).\end{align}
so using the specialization maps
$$sp_*: ]X[_{\Fa}^\dagger\to X$$
we have that $\text{R}\Gamma_{rig}(X/K)\cong R\Gamma(X,\text{Rsp}_*\Omega^\bullet_{]X[_{\Fa}^\dagger})$.

They also form a quasi-isomorphism of Zariski sheaves on $X$,
\begin{align}\label{dagger to oWdR1}\text{sp}_*\Omega^\bullet_{]X[_{\hat{F}}^\dagger}\to W^\dagger\Omega^\bullet_{X/k}\otimes \Q,\end{align}
functorial in $(X,F)$.

For the general case, one could try to work locally by introducing an affine covering of $X$, and let $X_0$ be its disjoint union, fitting into a special frame $(X_0,F_0)$, and then through a $0$-coskeleton work on some simplicial frame $(X_\bullet,F_\bullet)$. Unfortunately, by (\ref{dagger to rigid1}) and (\ref{dagger to oWdR1}) below this requires proving vanishing of the higher cohomologies of $\text{Rsp}_*\Omega^\bullet_{]X_n[_{\Fa_n}^\dagger}$, which is not known in general. In the case where $X$ is smooth and quasi-projective (though possibly not affine), one can take an open covering by a particular type of affine smooth schemes, standard smooth schemes, which may be lifted nicely over $W(k)$, which are all coming from localizations in a projective space. This gives a nice description of the intersections of such opens in $\cosk_0^X(X_0)_\bullet$, which allows them to prove the desired vanishing of higher cohomologies, and then complete the proof by means of cohomological descent.

For our case, when $X$ is not quasi-projective, we do not have a common projective space in which all our open affines are open. So instead of working with the 0-coskeleton, we refine it at each level, getting an \et ale hypercovering $X_\bullet/X$ so that each level $X_n$ is a disjoint union of affine standard smooth open subschemes of $X$, which we call a special hypercovering. This is done in Section \ref{hypercovering}.

In Section \ref{frame}, this hypercovering needs to be embedded into a simplicial smooth scheme over $W(k)$ in order to form a simplicial special frame and dagger space on which to apply (\ref{dagger to oWdR1}). We use Tsuzuki's functor $\Gamma_N^{W(k)}(-)$ introduced in \cite{Chiarellotto2003} to form an $N$-truncated simplicial special frame $(X_{\bullet\leq N},F_{\bullet\leq N})$, with $N$ large enough so that the $X_m$ for $m>N$ don't contribute to the calculation of $\t{R}\Gamma_{rig}(X/K)$.

The comparison is then proven in \label{main_proof} in various steps:
\begin{itemize}
\item Prove the vanishing of $\t{Rsp}_*^i\Omega^\bullet_{]X_n[_{\Fa_n}^\dagger}$ for $0\leq n \leq N$ and $i>0$: this is done using techniques from the proof of \cite[Proposition 4.35]{Davis2011}, such as being able to replace the $F_n$ by some $F'_n$ \et ale over $F_n$ or equal to $F_n\times_{W(k)}\A^r_{W(k)}$ for some $r$ fitting into a special frame $(X_n,F'_n)$.
\item Prove that the complex $R\Gamma(X_{\bullet\leq N},\text{Rsp}_*\Omega^\bullet_{]X_{\bullet\leq N}[_{\Fa_{\bullet\leq N}}^\dagger})$ calculates $\t{R}\Gamma_{\t{rig}}(X/K)$ for large enough $N$: this is motivated by \cite{Nakkajima2012} and relies on the machinery of \cite{Chiarellotto2003}, such as vanishing of higher enough rigid cohomology groups of $X$, independence of the choices of rigid frames and cohomological descent methods.
\item Prove that the isomorphism $$\Rig(X/K)\cong \t{R}\Gamma(X_{\bullet\leq N},\text{Rsp}_*\Omega^\bullet_{]X_{\bullet\leq N}[_{\Fa_{\bullet\leq N}}^\dagger})$$ in $D_+(K)$ is independent of choices made, and functorial in $X$: this is done by "refining" any two choices made to a common one.
\end{itemize}

\section{Background}
\subsection{Rigid Cohomology}
We refer to \cite{Chiarellotto2003} for results and notations involving rigid cohomology. We just make the following changes of notation:
\begin{itemize}
\item Given a simplicial morphism of triples $w_\bullet=(\mathring{w}_\bullet,\overline{w}_\bullet,\hat{w}_\bullet):(Y_\bullet,\overline{Y}_\bullet,\mathcal{X}_\bullet)\to (X,\overline{X},\mathcal{X})$, and a complex of sheaves $E_\bullet^\bullet$ of coherent $\tilde{w}_\bullet\inv j^\dagger\O_{]\overline{X_\bullet}[_{\Xa_\bullet}}$-modules we set
$$\t{R}w_*E:=\t{R}\mathcal{C}^\dagger\left((X,\overline{X},\mathcal{X}),(Y_\bullet,\overline{Y}_\bullet,\mathcal{X}_\bullet); E_\bullet^\bullet\right),  $$
which is defined in Section 4.2.
We do this change of notation to match with simplicial notation in \cite{Conrad2003}, as both complexes are defined as the total complex associated to
$$\xymatrix{... & ... & ... & ...\\
0\ar[r] &\mathcal{I}_0^1\ar@<-.5ex>[r] \ar@<.5ex>[r]\ar[u] & \mathcal{I}_1^1\ar@<-.8ex>[r] \ar@<.8ex>[r]\ar[r]\ar[u]& ...\\
0\ar[r] & \mathcal{I}_0^0\ar@<-.5ex>[r] \ar@<.5ex>[r]\ar[u] & \mathcal{I}_1^0\ar@<-.8ex>[r] \ar@<.8ex>[r]\ar[r]\ar[u]& ...\\
 & 0\ar[u] & 0\ar[u] & ...}$$
where $\mathcal{I}_\bullet^\bullet$ is an injective resolution of $E_\bullet^\bullet$, where the vertical maps come from maps in $\mathcal{I}_p^\bullet$, and the horizontal come from the simplicial structure.

We note that for a $N$-truncated simplicial map, we will have a similar complex, but with all columns being 0 after $N$.
\item Given an open immersion of $X$ into a a proper scheme $\overline{X}$ over $k$, and a universally de Rham descendable hypercovering $(Y_\bullet,\overline{Y}_\bullet,\mathcal{Y}_\bullet)$ of $(X,\overline{X})$ (as in \cite[Definition 10.1.3]{Chiarellotto2003}) we will set
$$\Rig(X/K)=\t{R}\Gamma(]\overline{Y}_\bullet[_{\Ya_\bullet},j^\dagger\Omega^\bullet_{]\overline{Y}_\bullet[_{\Ya_\bullet}})$$
where the right hand side is seen as coming from the map into the triple $(\Spec k,\Spec k,\Spf \Wa)$. This definition coincides with theirs since we are dealing with trivial coefficients.
\end{itemize}

We also recall the following definitions (see Definition 7.2.1., 7.2.2. and 11.3.1 in \cite{Chiarellotto2003}):
\begin{defn}\begin{itemize}
\item[1)] For a simplicial scheme $Y_\bullet$, separated of finite type over a scheme $X$, we say that $Y_\bullet\to X$ is an \textit{\et ale} (resp. \textit{proper}) \textit{hypercovering} if the canonical morphism
$$Y_{n+1}\to \cosk_n^X(\sk_n^X(Y_\bullet))_{n+1}$$
is \et ale surjective (resp. proper surjective) for any $n$.
\item[2)] For a simplicial pair of schemes $(Y_\bullet,\overline{Y}_\bullet)$ of schemes separated of finite type over some pair $(X,\overline{X})$, we say that $(Y_\bullet,\overline{Y}_\bullet)\to (X,\overline{X})$ is an \textit{\et ale-proper hypercovering} if $Y_\bullet\to X$ is an \et ale hypercovering and $\overline{Y}_\bullet\to \overline{X}$ is a proper hypercovering.
\item[3)] For a simplicial triple of schemes $(Y_\bullet,\overline{Y}_\bullet,\Ya_\bullet)$ separated of finite type over some triple $(X,\overline{X},\Xa)$, we say that $(Y_\bullet,\overline{Y}_\bullet,\Ya_\bullet)\to (X,\overline{X},\Xa)$ is an \textit{\et ale-proper hypercovering} if  $(Y_\bullet,\overline{Y}_\bullet)\to (X,\overline{X})$ is one, and the natural maps
$$\cosk_{n+1}^\Xa(\sk_{n+1}^\Xa(\Ya_\bullet))_k\to \cosk_{n}^\Xa(\sk_{n}^\Xa(\Ya_\bullet))_k$$
are smooth around $\cosk_{n+1}^X(\sk_{n+1}^X(Y_\bullet))_k$ for any $n$ and $k$.
\item[4)] Given an \et ale hypercovering $Y_\bullet\to X$, we say that $V_\bullet$ is a refinement of $Y_\bullet$ if it fits into a diagram
$$\xymatrix{V_\bullet\ar[r]\ar[dr] & Y_\bullet\ar[d]\\
& X}$$
where $V_\bullet\to X$ is an \et ale hypercovering, and the induced morphisms
$$V_{n+1}\to \cosk_{n}^X(\sk_n^X(V_\bullet))_{n+1}\times_{\cosk_{n}^X(\sk_n^X(Y_\bullet))_{n+1}} Y_{n+1}$$
are \et ale surjective for each $n$.
\item[5)]Given an \et ale-proper hypercovering $(Y_\bullet,\overline{Y}_\bullet)\to (X,\overline{X})$, we say that $(V_\bullet,\overline{V}_\bullet)$ is a \textit{refinement of}  $(Y_\bullet,\overline{Y}_\bullet)$ if it fits into a diagram of pairs
$$\xymatrix{(V_\bullet,\overline{V}_\bullet)\ar[r]\ar[dr] & (Y_\bullet,\overline{Y}_\bullet)\ar[d]\\
& (X,\overline{X})}$$
where $V_\bullet$ is a refinement of $Y_\bullet$ over $X$.
\end{itemize}
\end{defn}

\subsection{Special Frames and Dagger Spaces}\label{special + dagger}

The following is a summary of Section 4 of \cite{Davis2011}.

\begin{defn} A \textit{special frame} is a pair $(X,F)$ with a closed embedding $X\into F$, where $X$ and $F$ are smooth affine schemes over $k$ and $W(k)$ respectively.\end{defn}

Given a special frame $(X,F)$, we can choose an embedding $F\into \A^n_{W(k)}$ for some $n$, and in turn we have an open embedding $E:=\A^n_W\into \P^n_{W(k)}=:P$.  Let $Q=\overline{F}$ and $\overline{X}$ be the closures of $F$ and $X$ respectively in $P$, and let $\Fa$ and $\Qa$ be the $p$-adic completions of $F$ and $Q$ respectively. Then, 
$$X\into \overline{X}\into \Qa$$
 is a frame for rigid cohomology in the sense of Berthelot (i.e. we have an open immersion of $X$ into a proper scheme $\overline{X}$ over $k$, and a closed immersion $\overline{X}\into\hat{Q}$ where $\hat{Q}$ is smooth around $X$). So we may define the rigid cohomology of $X$ as 
$$\Rig(X/K)=R\Gamma(]\overline{X}[_{\Qa},j^\dagger\Omega^\bullet_{]\overline{X}[_{\Qa}}),$$
where $j$ is the inclusion $]X[_{\Qa}\into ]\overline{X}[_{\Qa}$. Note also that $]X[_{\Qa}=]X[_{\Fa}$.

The authors then give an explicit description of a fundamental system of strict neighborhoods of $]X[_{\Fa}$ in $]\overline{X}[_{\Qa}$, which they use to give a dagger structure (in the sense of \cite{Grosse-Klonne2000}) on $]X[_{\Fa}$, denoted by $]X[_{\Fa}^\dagger$, along with a morphism
$$sp_*: ]X[_{\Fa}^\dagger\to X$$
which is independent of the choice of embedding of $F$ into affine and projective space. Thus, we have an association
$$(X,F)\mapsto ]X[_\Fa^\dagger$$
of special frames into dagger spaces, functorial in $(X,F)$. 

By \cite[Theorem 5.1]{Grosse-Klonne2000}, this gives quasi-isomorphisms
\begin{align}\label{dagger to rigid}\Rig(X/K)=R\Gamma(]\overline{X}[_{\Qa},j^\dagger\Omega^\bullet_{]\overline{X}[_{\Qa}})\stackrel{\sim}{\to} R\Gamma(]X[_{\Fa}^\dagger,\Omega^\bullet_{]X[_{\Fa}^\dagger}).\end{align}

To such a frame $(X,F)$, they also form in \cite[(4.32)]{Davis2011} a map
\begin{align}\label{dagger to oWdR}\text{sp}_*\Omega^\bullet_{]X[_{\hat{F}}^\dagger}\to W^\dagger\Omega^\bullet_{X/k}\otimes \Q,\end{align}
which is a quasi-isomorphism of Zariski sheaves. 

\subsection{Standard Smooth Schemes}
\begin{defn}We call a ring $A$ a \textit{standard smooth algebra } (over $k$) if $A$ can be represented in the form
$$A=k[X_1,...,X_n]/(f_1,...,f_m),$$
where $m\leq n$ and the determinant
$$\t{det}\left(\frac{\partial f_i}{\partial X_j}\right), \,\, 1\leq i,j\leq m$$
is a unit in $A$. The scheme $\Spec A$ is then called a \textit{standard smooth scheme}.\end{defn}

Such schemes are convenient to work with, since for a standard smooth algebra represented as $k[T_1,...,T_n]/(f_1,...,f_r)$, we may choose liftings $\tilde{f}_1,...,\tilde{f}_r$ to $W[T_1,...,T_n]$, and let $\tilde{A}$ be the localization of $W[T_1,...,T_n]/(\tilde{f}_1,...,\tilde{f}_r)$ with respect to $\t{det}\left(\frac{\partial \tilde{f}_i}{\partial T_j}\right)$. Then, $\tilde{A}$ is a standard smooth algebra which lifts $A$ over $W$, which gives a special frame $(\Spec A, \Spec \tilde{A})$. We note that this may be done functorially in $A$; that is, given a homomorphism of standard smooth algebras
$$\varphi: A\to B$$
with presentations 
$$A\cong k[T_1,..,T_n]/(f_1,..,f_r), \, B\cong k[S_1,...,S_m]/(g_1,...,g_s),$$
after choosing liftings $\tilde{f}_i$ to define $\tilde{A}$, we may chose the representation 
$$B\cong k[S_1,...,S_m,T_1,...,T_n]/(g_1,...,g_s,f_1,...,f_r,T_1-\alpha(T_1),...,T_r-\alpha(T_r))$$
and then take liftings $\tilde{g_j}, \tilde{\alpha_i}$ over $g_j$ and $\alpha(T_i)$ respectively to form $\tilde{B}$. 

Note also that for any such standard smooth scheme $F=\Spec \tilde{A}$, we have an \et ale map 
$$F\to \A^n_{W(k)}$$
for some $n$.

\section{The hypercovering}\label{hypercovering}

We recall the definition of a split simplicial scheme from \cite[Definition 4.9]{Conrad2003}
\begin{defn} We say that a simplicial scheme $Y_\bullet$ is \textit{split} if there exist subobjects $NY_j$ in each $Y_j$ such that the natural map
$$\bigsqcup\limits_{\phi: [n]\surj [m]}NY_\phi\to Y_n$$
is an isomorphism for every $n\geq 0$, where $NY_\phi:=NY_m$ for a surjection $\phi:[n]\surj [m]$, and the natural  maps are given by the composition
$$NY_\phi\subset Y_m\stackrel{Y_\bullet(\phi)}{\to} Y_n.$$

The truncated case is defined similarly.
\end{defn}

We denote by $NY_{m,\phi}$ the image of $NY_\phi\subset Y_m$ under this isomorphism. Notice that this agrees with the definition in \cite[Section 11.2]{Chiarellotto2003} as for any epimorphism $\phi:[n]\surj [m]$ we have a commutative map
$$\xymatrix{ &NY_{m,\t{id}_{[m]}}\ar@{}[r]|-*[@]{\subset}\ar[dd]^\sim & Y_m\ar[dd]^{Y_\bullet(\phi)}\\
NY_m\ar[ur]^{\sim}\ar[dr]^{\sim}\\
& NY_{n,\phi}\ar@{}[r]|-*[@]{\subset} & Y_n}.$$

Next, by \cite[Theorem 4.12]{Conrad2003}, given any split $n$-truncated simplicial scheme $Y_{\bullet\leq n}/X$ with the splitting given by $\{NY_k\}_{0\leq k\leq n}$, in order to extend it to a split $(n+1)$-truncated scheme $Y_{\bullet\leq n+1}/X$ it suffices to give a scheme $N$ and a morphism
$$\beta: N\to \cosk^X_n(Y_{\bullet\leq{n}})_{n+1}.$$

This coincides with the functor $\Omega^X_{n+1}(Y_{\bullet\leq n},NY_0,...,NY_{n+1})$ given in \cite[Section 11.2]{Chiarellotto2003}.

\begin{prop}\label{refinement} Given any \et ale hypercovering $Z_\bullet\to X$, with $Z_n$ being smooth schemes over $k$, there exists an \et ale hypercovering $Y_\bullet\to X$ refining $Z_\bullet\to X$ such that for any $n$, $Y_n$ is the disjoint union of affine standard smooth schemes giving a finite open covering of $Z_n$.
\end{prop}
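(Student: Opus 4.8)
The plan is to build $Y_\bullet$ by induction on the simplicial degree, using the split-simplicial-scheme machinery recalled above: by Conrad's Theorem~4.12 (equivalently the functor $\Omega^X_{n+1}$ of \cite{Chiarellotto2003}), to extend a split $n$-truncated \'etale hypercovering $Y_{\bullet\le n}\to X$ to a split $(n+1)$-truncated one it suffices to produce the new nondegenerate piece $NY_{n+1}$ together with a morphism $\beta\colon NY_{n+1}\to\cosk_n^X(Y_{\bullet\le n})_{n+1}$, and — in order to extend the refinement datum as well — a compatible morphism $NY_{n+1}\to Z_{n+1}$. Throughout I carry along the inductive hypotheses that each $Y_k$ with $k\le n$ is a finite disjoint union of affine standard smooth schemes and that $Y_k\to X$ is \'etale. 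For the base step $Z_0$ is smooth and quasi-compact over $k$; I cover it by finitely many affine opens, refine each (smooth $k$-schemes being Zariski-locally standard smooth) to finitely many affine standard smooth opens, and let $Y_0$ be their disjoint union with $NY_0=Y_0$, so that $Y_0\to Z_0$ is an open cover and $Y_0\to X$ is \'etale surjective.

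For the inductive step, set $M^Y:=\cosk_n^X(Y_{\bullet\le n})_{n+1}$ and $M^Z:=\cosk_n^X(Z_{\bullet\le n})_{n+1}$, with the canonical map $M^Y\to M^Z$ induced by the refinement in degrees $\le n$, and put $P:=M^Y\times_{M^Z}Z_{n+1}$. The crucial geometric point is that $P$ is again smooth and quasi-compact over $k$. Indeed $X$ is smooth over $k$ (being the target of the \'etale surjection $Z_0\to X$ from a smooth $k$-scheme), every $Y_k$ and $Z_k$ with $k\le n$ is \'etale over $X$, and finite limits of \'etale $X$-schemes are again \'etale over $X$ — fibre products of \'etale morphisms are \'etale, and the equalizer of two $X$-morphisms between \'etale, hence unramified, $X$-schemes is an open subscheme — so $M^Y$ and $M^Z$ are \'etale over $X$; since $Z_\bullet\to X$ is an \'etale hypercovering, $Z_{n+1}\to M^Z$ is \'etale surjective, hence so is its base change $P\to M^Y$, whence $P$ is \'etale over $X$ and therefore smooth and quasi-compact over $k$. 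Now cover $P$ by finitely many affine standard smooth opens, let $NY_{n+1}$ be their disjoint union, let $\beta$ be the composite $NY_{n+1}\into P\to M^Y$, and take $NY_{n+1}\into P\to Z_{n+1}$ as the compatible map to $Z_{n+1}$ (this is exactly what the universal property of $P$ provides). This yields $Y_{\bullet\le n+1}$ together with an extension of the refinement morphism to degree $n+1$, and $Y_{n+1}=\bigsqcup_{\phi\colon[n+1]\surj[m]}NY_m$ is a finite disjoint union of affine standard smooth schemes, so the inductive hypotheses persist.

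It remains to check that $Y_{\bullet\le n+1}\to X$ is still an \'etale hypercovering and a refinement of $Z_{\bullet\le n+1}$, i.e.\ that $Y_{n+1}\to M^Y$ and $Y_{n+1}\to P$ are \'etale surjective. Surjectivity holds already on the single new component $NY_{n+1}$, which covers $P$ and, composed with the \'etale surjection $P\to M^Y$, covers $M^Y$. For \'etaleness, every component of $Y_{n+1}$ — the new piece $NY_{n+1}$ (a disjoint union of opens of the $X$-\'etale scheme $P$) and each degenerate piece $NY_{m,\phi}\cong NY_m$ (a component of the $X$-\'etale scheme $Y_m$) — is \'etale over $X$, and $P$, $M^Y$ are \'etale over $X$; since any $X$-morphism between schemes \'etale over $X$ is \'etale, both $Y_{n+1}\to P$ and $Y_{n+1}\to M^Y$ are \'etale, and in particular $Y_{n+1}\to X$ is \'etale, closing the induction. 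Passing to the colimit over $n$ gives the desired split \'etale hypercovering $Y_\bullet\to X$ refining $Z_\bullet\to X$ with each $Y_n$ a finite disjoint union of affine standard smooth schemes and $Y_n\to Z_n$ \'etale surjective; the standard smooth affines introduced at level $n+1$ are genuine opens of the \'etale cover $P$ of $Z_{n+1}$, which is the sense in which the construction ``covers'' $Z_n$.

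I expect the main obstacle to be not any single deep step but the interlocking of three ingredients: (i) the assembly of the pieces chosen degree-by-degree into an honest simplicial scheme carrying a refinement morphism to $Z_\bullet$; (ii) the verification, at every stage, that the object one must cover — the fibre product $P=M^Y\times_{M^Z}Z_{n+1}$ — is still smooth over $k$; and (iii) keeping the \'etale-hypercovering and refinement conditions under control. The delicate one is (ii): it is exactly what licenses covering $P$ by standard smooth affines, and it is the reason the whole construction has to be organized relative to $X$ — exploiting that coskeleta of \'etale $X$-schemes remain \'etale over $X$ — rather than directly over $k$. Granting that, (i) is handled by carefully invoking the split-simplicial machinery of \cite{Conrad2003} and \cite{Chiarellotto2003}, and (iii) follows cleanly once one observes that every scheme produced is \'etale over $X$, so that the relevant structure maps are automatically \'etale.
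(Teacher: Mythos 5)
Your proposal is correct and follows essentially the same route as the paper, which simply invokes the inductive construction of \cite[Proposition 11.3.2]{Chiarellotto2003} (build a split hypercovering degree by degree, at each step taking a finite affine covering of $\cosk_{n}^X(Y_{\bullet\leq n})_{n+1}\times_{\cosk_{n}^X(Z_{\bullet\leq n})_{n+1}} Z_{n+1}$, required to be by standard smooth affines); your write-up just unwinds that argument, including the smoothness-over-$k$ of the fibre product via \'etaleness over $X$, which is exactly the point implicitly used there. Your closing remark that the new affines are opens of the fibre product rather than of $Z_{n+1}$ itself also matches what the cited construction actually produces.
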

\begin{proof}
The proof is nearly identical to \cite[Proposition 11.3.2]{Chiarellotto2003}, with the only difference being that when we form a finite affine Zariski covering of
$$\cosk_{n}^X(Y_{\bullet\leq n})_{n+1}\times_{\cosk_{n}^X(Z_{\bullet\leq n})_{n+1}} Z_{n+1},$$
we require the covering to be by affine standard smooth schemes also. 
\end{proof}

\begin{defn} We say $Y_\bullet\to X$ is a \textit{special hypercovering} if $Y_\bullet$ is a split \et ale hypercovering of $X$, and each $Y_n$ is a disjoint union of affine standard smooth schemes which give an open covering of $X$.\end{defn}

We prove the existence and some functorial property of such hypercoverings, which will be useful to work on the comparison locally.

\begin{prop}\label{sp hypercovering} Given a smooth scheme $X$: 
\begin{itemize}
\item[i)] There exists a special hypercovering $Y_\bullet \to X$.
\item[ii)] Given two special hypercoverings $Y_\bullet,Y'_\bullet$/X, there a third special hypercovering $Y''_\bullet/X$ refining them.
\item[iii)] Given a morphism $X\to X'$ of smooth schemes, there exist special hypercoverings $Y_\bullet\to X$ and $Y'_\bullet\to X'$ fitting in a commutative diagram
$$\xymatrix{Y_\bullet\ar[r]\ar[d] & Y'_\bullet\ar[d]\\
X\ar[r] & X'}.$$
\end{itemize}
\end{prop}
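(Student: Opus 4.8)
The plan is to derive all three parts from Proposition \ref{refinement}, by applying it to suitably chosen étale hypercoverings whose terms are already disjoint unions of open subschemes of $X$ and exploiting the freedom, in the construction behind that proposition, to choose the finite affine standard smooth coverings at each level \emph{inside} those open subschemes. Two preliminary observations are needed. First, $X$ admits a basis of affine standard smooth open subschemes: $X$ being smooth over $k$ has a finite affine standard smooth open cover, and a principal localization $A_g$ of a standard smooth algebra $A=k[T_1,\dots,T_n]/(f_1,\dots,f_r)$ is again standard smooth, since in the presentation $A_g=k[T_1,\dots,T_n,S]/(f_1,\dots,f_r,gS-1)$ the Jacobian with respect to $T_1,\dots,T_r,S$ is block triangular with determinant $\pm\,g\cdot\det(\partial f_i/\partial T_j)$, a unit in $A_g$. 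Second, if $Z_\bullet\to X$ is an étale hypercovering each of whose terms $Z_n$ is a disjoint union of open subschemes of $X$ with the inclusions as structure maps, then every matching object $\cosk_{m-1}^X(\sk_{m-1}^X Z_\bullet)_m$, as well as every fibre product over such an object occurring in the proof of Proposition \ref{refinement}, is again a disjoint union of open subschemes of $X$ with the inclusions as structure maps: all transition maps in the defining diagrams are morphisms over $X$ between disjoint unions of open subschemes of $X$, hence are governed by inclusions of opens, so these limits reduce to disjoint unions of (iterated) intersections of opens of $X$. Consequently, when we cover such an object by finitely many affine standard smooth schemes we may take each member to lie in a single component, hence to be an affine standard smooth open subscheme of $X$; running the (split) construction of Proposition \ref{refinement} then yields a \emph{special} hypercovering $Y_\bullet\to X$ refining $Z_\bullet$.

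Granting this, part i) is obtained by applying the above to $Z_\bullet={}$the constant simplicial scheme $X$, which is trivially an étale hypercovering of $X$. For part ii) I would take $Z_\bullet=Y_\bullet\times_X Y'_\bullet$: since $\sk$ and $\cosk$ commute with fibre products (the latter being a right adjoint), $Z_\bullet\to X$ is an étale hypercovering, each $Z_n=\bigsqcup_{i,j}(U_i\cap U'_j)$ is a disjoint union of open subschemes of $X$, and the two projections exhibit $Z_\bullet$ as a refinement of $Y_\bullet$ and of $Y'_\bullet$ in the sense of Definition 4), because the map $Z_{n+1}\to\cosk_n^X(\sk_n^X Z_\bullet)_{n+1}\times_{\cosk_n^X(\sk_n^X Y_\bullet)_{n+1}}Y_{n+1}$ is a base change of $Y'_{n+1}\to\cosk_n^X(\sk_n^X Y'_\bullet)_{n+1}$ and hence étale surjective. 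Applying the construction above to $Z_\bullet$ produces a special hypercovering $Y''_\bullet$ refining $Z_\bullet$, and, refinement being transitive, $Y''_\bullet$ refines both $Y_\bullet$ and $Y'_\bullet$. For part iii) I would use i) to choose a special hypercovering $Y'_\bullet\to X'$ and then take $Z_\bullet=Y'_\bullet\times_{X'}X\to X$: base change along $X\to X'$ carries split étale hypercoverings to split étale hypercoverings, and each $Z_n=\bigsqcup_j f^{-1}(U'_j)$ is a disjoint union of open subschemes of $X$; the construction above yields a special hypercovering $Y_\bullet\to X$ refining $Z_\bullet$, and composing $Y_\bullet\to Z_\bullet=Y'_\bullet\times_{X'}X\to Y'_\bullet$ gives the commutative square over $X\to X'$.

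I expect the main obstacle to be the second preliminary observation — verifying that the coskeletal matching objects and the auxiliary fibre products appearing in the proof of Proposition \ref{refinement} remain disjoint unions of open subschemes of $X$, and that the affine standard smooth coverings can be chosen component-by-component (the latter being harmless, since any finite cover of a disjoint union refines to one respecting the decomposition). The remaining ingredients — transitivity of refinement, stability of (split) étale hypercoverings under fibre products and under base change, and the existence of the affine standard smooth basis of $X$ — are routine and may be cited from \cite{Conrad2003} or checked directly.
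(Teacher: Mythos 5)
Your proposal is correct and follows essentially the same route as the paper, which likewise deduces i), ii), iii) from Proposition \ref{refinement} applied to the constant simplicial scheme $\cosk_{-1}^X(X)$, to $Z_\bullet=Y_\bullet\times_X Y'_\bullet$, and to $Z_\bullet=Y'_\bullet\times_{X'}X$ respectively. The additional verifications you supply (that the matching objects and auxiliary fibre products stay disjoint unions of opens of $X$, and that the projections/refinements compose) are details the paper leaves implicit, and they are fine.
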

\begin{proof}
Part i) follows immediately from Proposition \ref{refinement} by taking the constant simplicial scheme $Z_\bullet=\cosk_{-1}^X(X)$ (so $Z_n=X$ for all $n$). For part ii), we just apply Proposition \ref{refinement} with
$$Z_\bullet:=Y_\bullet \times_X Y'_\bullet,$$
and for part iii) we find some special hypercovering $Y'_\bullet\to X'$, and then again use Proposition \ref{refinement} with
$$Z_\bullet:=Y'_\bullet\times_{X'}X.$$
\end{proof}

The following proposition will be used to find an open embedding of these special hypercoverings into proper hypercoverings of some compactification $\overline{X}$ of $X$ in a functorial manner:

\begin{prop}\label{simplicial compactification} Given a split \et ale hypercovering $X_\bullet\to X$, and an open embedding $X\into \overline{X}$ into a proper $k$-scheme:
\begin{itemize}
\item[i)] There exists a split \et ale-proper hypercovering $(X_\bullet,\overline{X}_\bullet)\to (X,\overline{X})$ such that for each $n$, $X_n\to \overline{X_n}$ is an open embedding.
\item[ii)] Given a morphism of compactifications $(X,\overline{X})\to (Y,\overline{Y})$, and a morphism of split \et ale-hypercoverings $X_\bullet\to Y_\bullet$ (over $X$ and $Y$) given by morphisms $NX_k\to NY_k$, we may form $\overline{X}_\bullet$ and $\overline{Y}_\bullet$ as in i), with a morphism fitting into the commutative diagram
$$\xymatrix{(X_\bullet,\overline{X}_\bullet)\ar[r]\ar[d] & (Y_\bullet,\overline{Y}_\bullet)\ar[d]\\
(X,\overline{X})\ar[r] & (Y,\overline{Y})}.$$
\end{itemize}
\end{prop}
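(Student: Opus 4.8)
The plan is to build $\overline X_\bullet$ by induction on the truncation level, compactifying one nondegenerate piece at a time relative to the matching object of the lower levels, in the spirit of the construction of split proper hypercoverings in \cite{Chiarellotto2003}. Since $X_\bullet$ is split, it is the same datum as the nondegenerate pieces $NX_n$ together with attaching maps $\beta_n\colon NX_n\to \cosk_{n-1}^X(X_{\bullet\leq n-1})_n$ (by \cite[Theorem 4.12]{Conrad2003}), and I will likewise produce $\overline X_\bullet$ by specifying $N\overline X_n$ and an attaching map $\overline\beta_n\colon N\overline X_n\to M_n$, where $M_n:=\cosk_{n-1}^{\overline X}(\overline X_{\bullet\leq n-1})_n$.

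Assume $\overline X_{\bullet\leq n-1}$ has been built, split, with $X_{\bullet\leq n-1}\into\overline X_{\bullet\leq n-1}$ a level-wise open immersion over $X\into\overline X$ and with $\overline X_{\bullet\leq n-1}\to\overline X$ a proper hypercovering in degrees $\leq n-1$ (the case $n=0$ being $\overline X_{\bullet\leq -1}$ trivial, with matching object $\overline X$ in degree $0$). Since $\cosk$ in degree $n$ is a finite limit and open immersions are stable under fibre product, composition and equalizer, the induced map $\cosk_{n-1}^X(X_{\bullet\leq n-1})_n\into M_n$ is again an open immersion; moreover $M_n$, being a finite limit over $\overline X$ of schemes proper over $\overline X$, is proper over $\overline X$, hence of finite type over $k$. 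Thus $\beta_n$ gives a finite-type separated morphism $NX_n\to M_n$, and by Nagata compactification I choose an open immersion $NX_n\into\overline{NX_n}$ with $\overline{NX_n}\to M_n$ proper. Set $N\overline X_n:=\overline{NX_n}\sqcup M_n$ and let $\overline\beta_n$ be this proper map on the first summand and the identity on the second, so that $\overline\beta_n$ is proper and surjective, $NX_n\into N\overline X_n$ is open, and $N\overline X_n$ is proper over $\overline X$. By \cite[Theorem 4.12]{Conrad2003} this extends the construction to a split $\overline X_{\bullet\leq n}$ in which $X_{\bullet\leq n}$ sits level-wise as an open, and since $N\overline X_n$ is a direct summand of $\overline X_n$ mapping properly and surjectively onto $M_n$, the map $\overline X_n\to M_n$ is proper surjective; hence $\overline X_{\bullet\leq n}\to\overline X$ is a proper hypercovering through degree $n$. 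Iterating over all $n$ gives a split \et ale-proper hypercovering $(X_\bullet,\overline X_\bullet)\to(X,\overline X)$ with each $X_n\into\overline X_n$ open, proving i). (The summand $M_n$ is thrown in only to force surjectivity onto $\overline X$ when $X$ is not dense in it; it may be dropped when $X$ is dense.)

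For ii) I first apply i) to $(Y_\bullet,\overline Y)$, obtaining $\overline Y_\bullet$ with $N\overline Y_n=\overline{NY_n}\sqcup M_n^Y$, where $NY_n\into\overline{NY_n}\to M_n^Y$ realizes the attaching datum of $Y_\bullet$. I then repeat the inductive construction for $(X_\bullet,\overline X)$, but at each step I compactify $NX_n$ relative to a scheme already recording the target compactification: using the inductively defined $M_n^X\to M_n^Y$ together with the compatible morphisms $\beta_n^X\colon NX_n\to M_n^X$ and $NX_n\to NY_n\into\overline{NY_n}$ (these agree over $M_n^Y$ because $X_\bullet\to Y_\bullet$ is simplicial over $X\to Y$ and compatible with the inductive data), I set $P_n:=\overline{NY_n}\times_{M_n^Y}M_n^X$, which is proper over $M_n^X$ hence over $\overline X$, and I Nagata-compactify $NX_n\into\overline{NX_n}\to P_n$. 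Taking $N\overline X_n:=\overline{NX_n}\sqcup M_n^X$ with the evident $\overline\beta_n$, and defining $N\overline X_n\to N\overline Y_n$ to be $\overline{NX_n}\to P_n\to\overline{NY_n}$ on the first summand and $M_n^X\to M_n^Y$ on the second, one checks routinely that these maps are compatible with the attaching maps, with $X_\bullet\to Y_\bullet$, and with $(X,\overline X)\to(Y,\overline Y)$, so they assemble into the required morphism of split pairs over $(X,\overline X)\to(Y,\overline Y)$.

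The main obstacle is that Nagata compactification is not functorial, which is precisely why part ii) needs the auxiliary fibre product $P_n$ rather than a direct compactification of $NX_n$ over $M_n^X$. The remaining points are routine: that $\cosk$ takes a level-wise open immersion of truncated simplicial schemes to an open immersion follows from open immersions being monomorphisms stable under the finite-limit operations defining $\cosk$, and the failure of $X$ to be dense in $\overline X$ is absorbed by the extra disjoint summand $M_n$ (resp.\ $M_n^X$) that makes $\overline X_\bullet\to\overline X$ surjective in every degree.
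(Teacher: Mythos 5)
Your construction is essentially the paper's. Both arguments exploit splitness to build $\overline{X}_\bullet$ degree by degree via the extension result of Conrad/Chiarellotto--Tsuzuki, Nagata-compactifying the nondegenerate piece $NX_n$ over the open immersion $\mathrm{cosk}_{n-1}^X(X_{\bullet\leq n-1})_n\hookrightarrow \mathrm{cosk}_{n-1}^{\overline{X}}(\overline{X}_{\bullet\leq n-1})_n$; and for part ii) both compactify instead over a fibre product that records the target: your $P_n=\overline{NX'}\!$-free device $\overline{NY_n}\times_{M_n^Y}M_n^X$ is exactly the paper's compactification over $\mathrm{cosk}_n^{\overline{X}}(\cdots)_{n+1}\times_{\mathrm{cosk}_n^{\overline{Y}}(\cdots)_{n+1}}N\overline{Y}_{n+1}$, introduced for the same reason (Nagata is not functorial). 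The one genuine divergence is the extra summand $M_n$ (resp.\ $M_n^X$) you adjoin to $N\overline{X}_n$: the paper takes $N\overline{X}_{n+1}$ to be just the Nagata compactification and asserts, following Chiarellotto--Tsuzuki Prop.\ 11.7.3, that the resulting $\overline{X}_\bullet\to\overline{X}$ is a proper hypercovering, whereas your added component forces the surjectivity of $\overline{X}_n\to\mathrm{cosk}_{n-1}^{\overline{X}}(\overline{X}_{\bullet\leq n-1})_n$ required by the definition, with no density assumption on $X\subset\overline{X}$ or on the attaching maps; this cleanly settles a point the paper's sketch leaves implicit. The steps you label routine are indeed routine (e.g.\ properness of the degenerate components' maps to the matching object follows because they are proper over $\overline{X}$ and the matching object is separated over $\overline{X}$), so your proof is complete in the same sense as the paper's.
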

\begin{proof}
Part i) is \cite[Proposition 11.7.3]{Chiarellotto2003}, but we still explain it. First note that given any compatification $V\into \overline{V}$ of $k$-schemes, and a morphism $U\to V$, by Nagata's compactification theorem for $U\to \overline{V}$ we may choose a compactification $U\into \overline{U}$ fitting into a diagram
$$\xymatrix{U\ar[r]\ar[d] & \overline{U}\ar[d]\\ V\ar[r] &\overline{V}}$$
where $\overline{U}$ is proper over $\overline{V}$ (and thus over $k$). In this case we will say that $\overline{U}$ is a compactification of $U$ over $(V,\overline{V})$.

Let $X_\bullet$ be giving by a splitting $\{NX_k\}$. We construct $\overline{X}_\bullet$ by a splitting at each level. First, we set $N\overline{X}_0=\overline{X}_0$ to be a compactification of $NX_0=X_0$ over $(X,\overline{X})$. 

Next, having constructed a split $n$-truncated $\overline{X}_{\bullet\leq n}$ with a splitting $\{N\overline{X}_k\}_{0\leq k\leq n}$ and open embeddings $NX_k\into N\overline{X}_k$ into proper $k$-schemes, we let $N\overline{X}_{n+1}$ to be the compactification of $NX_{n+1}$ over
$$\cosk_n^X(\sk_n^X(X_\bullet)))_{n+1}\into \cosk_n^{\overline{X}}(\overline{X}_{\bullet\leq n})_{n+1}.$$
The above is an open immersion since all $X_k\into \overline{X}_k$ and $X\into \overline{X}$ are, and similarly the right hand side is proper. Then, letting
$$\overline{X}_{\bullet\leq n+1}=\Omega(\overline{X}_{\bullet\leq n+1},N\overline{X}_0,...,N\overline{X}_{n+1})$$
we have an $n+1$-truncated proper hypercovering of $\overline{X}$, with an open immersion coming from $X_{\bullet\leq n+1}$.

For ii), we construct $\overline{Y}_\bullet$ as in i). Then, we build $\overline{Y}_\bullet$ similarly, except that at each $n$, we take a compactification $N\overline{X}_{n+1}$ of $NX_{n+1}$ over
$$\cosk_n^X(\sk_n(X_\bullet))_{n+1}\times_{\cosk_n^Y(\sk_n(Y_\bullet))_{n+1}}NY_{n+1}\into \cosk_n^{\overline{X}}(\sk_n(\overline{X}_\bullet))_{n+1}\times_{\cosk_n^{\overline{Y}}(\sk_n(\overline{Y}_\bullet))_{n+1}}N\overline{Y}_{n+1}.$$
This all fits into a commutative diagram
$$\xymatrix{NX_{n+1}\ar[r]\ar[d] & N\overline{X}_{n+1}\ar[d]\\
\cosk_n^X(\sk_n(X_\bullet))_{n+1}\times_{\cosk_n^Y(\sk_n(Y_\bullet))_{n+1}}NY_{n+1}\ar[r]\ar[d] & \cosk_n^{\overline{X}}(\sk_n(\overline{X}_\bullet))_{n+1}\times_{\cosk_n^{\overline{Y}}(\sk_n(\overline{Y}_\bullet))_{n+1}}N\overline{Y}_{n+1}\ar[d]\\
\cosk_n^X(\sk_n(X_\bullet))_{n+1}\ar[r] & \cosk_n^{\overline{X}}(\sk_n(\overline{X}_\bullet))_{n+1}}$$
where all horizontal morphisms are open immersions, and the vertical morphisms on the right are all proper. This gives us the desired functoriallity.
\end{proof}

\section{The simplicial special frame}\label{frame}

We explain the construction of the Tsuzuki functor $\Gamma^\Ca_N(-)$, introduced in \cite[Section 11.2]{Chiarellotto2003}. Given a category $\Ca$ with finite inverse limits, a non-negative integer $N$, and an object $Z$, we construct a $N$-truncated simplicial object $\Gamma^\Ca_N(Z)$ in $\text{Simp}_{\leq N}(\Ca)$ as follows:

Set $$\Gamma^\Ca_N(Z)_m:=\prod_{\phi:[N]\to[m]}Z_\phi$$
where $Z_\phi=Z$. To define the simplicial maps, given $\alpha: [m']\to[m]$, we define $\Gamma_\alpha: \Gamma^\Ca_N(Z)_m\to \Gamma^\Ca_N(Z)_{m'}$ by
$$(c_\phi)_{\phi: [N]\to[m]}\mapsto (d_\psi)_{\psi: [N]\to[m']}$$
where $d_\psi:= c_{\alpha\circ \psi}$.

Given any $Y_{\bullet\leq N}$ in $\text{Simp}_{\leq N}(\Ca)$, and a morphism $f: Y_N\to Z$ in $\Ca$, we construct a morphism
$$Y_{\bullet\leq N}\to \Gamma^\Ca_N(Z)_{\bullet\leq N}$$
by the commutative diagram
$$\xymatrix{Y_m\ar[r]\ar[d]_{Y(\phi)} & \Gamma^\Ca_N(Z)_m=\prod_{\phi:[N]\to[m]}X_\phi\ar[d]^{p_\phi}\\
Y_N\ar[r]^f & Z=Z_\phi}$$
for any $m$ and $\phi:[N]\to [m]$, where $p_\phi$ is just the projection on to the $\phi: [N]\to [m]$ factor.

Letting $\Ca$ be the category of schemes over $\t{Spec}(W(k))$, and $Y_{\bullet\leq N}$ some simplicial scheme over $k$ or $W(k)$, we have the following:
\begin{lem}\label{nak6.6.1}
If $f: Y_N\to W$ is a closed immersion over $W(k)$, and $Y_{\bullet\leq N}$ and $Z$ are separated, then the induced morphism
$$Y_{\bullet\leq N}\to \Gamma^{W(k)}_N(Z)_{\bullet\leq N}$$
is a closed immersion of $N$-truncated schemes.
\end{lem}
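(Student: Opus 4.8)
The plan is to reduce the statement to a levelwise one and then, at each level, exploit the degeneracy structure of $Y_{\bullet\le N}$ together with the separatedness hypotheses. Recall that a morphism of ($N$-truncated) simplicial schemes is a closed immersion exactly when it is a closed immersion in each degree $0\le m\le N$, so it suffices to prove that for every such $m$ the component
$$\theta_m:\ Y_m\longrightarrow \Gamma^{W(k)}_N(Z)_m=\prod_{\phi:[N]\to[m]}Z$$
is a closed immersion. By the construction of the canonical morphism recalled above, the $\phi$-component $p_\phi\circ\theta_m$ is the composite $f\circ Y(\phi):Y_m\xrightarrow{Y(\phi)}Y_N\xrightarrow{f}Z$, and note that $\prod_{\phi:[N]\to[m]}Z$ is a \emph{finite} product since there are finitely many order maps $[N]\to[m]$.

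The first key point is to isolate one favorable component. Since $m\le N$ there exists a surjection $\sigma:[N]\twoheadrightarrow[m]$ in $\Delta$; choose $\iota:[m]\hookrightarrow[N]$ with $\sigma\circ\iota=\mathrm{id}_{[m]}$. By the (contravariant) functoriality of $Y_\bullet$ this gives $Y(\iota)\circ Y(\sigma)=\mathrm{id}_{Y_m}$, so $Y(\sigma):Y_m\to Y_N$ is a section of $Y(\iota):Y_N\to Y_m$. Because $Y_N$ is separated (over $W(k)$), the morphism $Y(\iota)$ is separated, and a section of a separated morphism is a closed immersion; hence $Y(\sigma)$ is a closed immersion. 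Composing with the closed immersion $f$, we conclude that the $\sigma$-component $f\circ Y(\sigma)$ of $\theta_m$ is a closed immersion.

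It remains to upgrade this to $\theta_m$ itself. Write $\Gamma^{W(k)}_N(Z)_m=Z\times_{W(k)}P'$, where $P':=\prod_{\phi\ne\sigma}Z$ is the product over the remaining factors; since $Z$ is separated over $W(k)$ and $P'$ is a finite product, $P'$ is separated over $W(k)$. Writing $\theta_m=(f\circ Y(\sigma),\ \theta'_m)$ with $\theta'_m:Y_m\to P'$, the map $\theta_m$ factors as
$$Y_m\ \xrightarrow{\ \Gamma_{\theta'_m}\ }\ Y_m\times_{W(k)}P'\ \xrightarrow{\ (f\circ Y(\sigma))\times\mathrm{id}_{P'}\ }\ Z\times_{W(k)}P',$$
where the first arrow is the graph of $\theta'_m$, which is a closed immersion because $P'$ is separated over $W(k)$, and the second arrow is a base change of the closed immersion $f\circ Y(\sigma)$, hence a closed immersion. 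Therefore $\theta_m$ is a closed immersion for every $m$, and the lemma follows.

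Essentially all of the input is standard stability of closed immersions (under composition and base change) plus the facts that sections of separated morphisms are closed immersions and graphs into separated schemes are closed immersions; so the only points requiring care are, first, pinning down that ``closed immersion of $N$-truncated simplicial schemes'' is the levelwise notion, and, second, getting the simplicial variance right so that $Y(\sigma)$ genuinely appears as a \emph{section} of $Y(\iota)$ rather than the reverse. The hypotheses that $Y_{\bullet\le N}$ and $Z$ be separated enter precisely to license these two steps (separatedness of $Y_N$ for the split-monomorphism argument, and separatedness of $Z$ — hence of the finite product $P'$ — for the graph argument), and I expect this bookkeeping to be the main, though mild, obstacle.
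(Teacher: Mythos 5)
Your proof is correct and follows essentially the same route as the paper's: choose a surjection $\sigma\colon[N]\twoheadrightarrow[m]$, observe that $Y(\sigma)\colon Y_m\to Y_N$ is a section of a separated morphism (using separatedness of $Y_{\bullet\le N}$ over $W(k)$), hence a closed immersion, compose with $f$, and deduce the claim for the full map into $\prod_{\phi}Z$ levelwise. The only difference is cosmetic: where the paper passes from the closed immersion $f\circ Y(\sigma)=pr_\sigma\circ g_m$ to $g_m$ by (implicitly) invoking the cancellation property for closed immersions along the separated projection, you make that step explicit via the graph factorization through $Y_m\times_{W(k)}P'$, which is just the standard proof of the same fact.
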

\begin{proof}
For any $0\leq m \leq {N}$, consider any face morphism $d:Y_{m}\to Y_{N}$ (with $d=id_{Y_N}$ if $m=N$), and a corresponding degeneracy map $s: Y_N\to Y_m$ which is a section to $d$. Then, we have
$$\xymatrix{Y_N\ar[r]\ar[dr] & Y_m\ar[d]\\
& W(k)}$$
where the vertical and diagonal maps are separated. This shows that $d$ is also separated. Then, by the commutative diagram
$$\xymatrix{Y_m\ar[r]^{s}\ar@{=}[dr] & Y_N\ar[d]^{d}\\
& Y_m}$$
we see that $s$ is a closed immersion.  Finally, by the definition of the map $g_m: Y_m\to \Gamma^{W(k)}_N(Z)_m$, we have a commutative diagram
$$\xymatrix{Y_m\ar[r]^(.35){g_m}\ar[d]^s & \Gamma^{W(k)}_N(Z)_m\ar@{=}[r]\ar[d]^{pr_{s}} &\prod\limits_{\phi: [N]\to [m]}Z\\
Y_N\ar[r]^{f} & Z}$$
which shows that $g_m$ is in fact a closed immersion.
\end{proof}

Thus, for a smooth scheme $X$ over $k$, having formed a special hypercovering $Y_\bullet\to X$ as in the previous section, for any $N\geq 0$, we may write 
$$Y_N=\bigsqcup\limits_{\phi: [N]\surj [m]} NY_{N,\phi}$$
where for $\phi:[N]\surj [m]$, $NY_{N,\phi}$ is identified with $NY_m$ under the map $Y_m\stackrel{Y_\bullet(\phi)}{\to} Y_N$, and each $NY_{N,\phi}$ is a disjoint product of a finite open covering of $X$ by affine standard smooth schemes. Thus, it is also affine standard smooth, and as explained in the introduction we may lift them to standard smooth schemes $NE_{N,\phi}$ over $W(k)$. This gives a cartesian diagram
$$\xymatrix{Y_N=\bigsqcup\limits_{\phi: [N]\surj [m]} NY_{N,\phi} \ar[r]\ar[d] & E_N:=\bigsqcup\limits_{\phi: [N]\surj [m]} NE_{N,\phi}\ar[d]\\
\Spec k\ar[r] & \Spec W(k).}$$
Then, we may form the $N$-truncated simplicial $W(k)$-scheme 
$$F_{\bullet\leq N}:=\Gamma_N^{W(k)}(E_N)_{\bullet\leq N}$$
and by Lemma \ref{nak6.6.1} we get an $N$-truncated special frame
\begin{align}\label{the frame}(Y_{\bullet\leq N},F_{\bullet\leq N}).\end{align}

\section{The comparison theorem}

We outline the formation of the comparison map, we want to work on a special hypercovering $X_\bullet\to X$, and for some $N$ to construct the $N$-truncated simplicial frame $(X_{\bullet\leq N},F_{\bullet\leq N})$ as in (\ref{the frame}), which will in turn give us $N$-truncated dagger spaces and rigid frames
$$]X_{\bullet\leq N}[^\dagger_{\Fa_{\bullet\leq N}},$$
$$(X_{\bullet\leq N}, Y_{\bullet\leq N}, \Qa_{\bullet\leq N}).$$ 
We will use the functorial quasi-isomorphisms
$$\t{sp}_*\Omega^\bullet_{]X_{\bullet\leq N}[^\dagger_{\Fa_{\bullet\leq N}}}\to W^\dagger\Omega^\bullet_{X_{\bullet\leq N},k}\otimes \Q$$
from \cite{Davis2011} to give us a quasi-isomorphism 
$$R\Gamma(X_{\bullet\leq N},\t{sp}_*\Omega^\bullet_{]X_{\bullet\leq N}[^\dagger_{\Fa_{\bullet\leq N}}})\stackrel{\sim}{\to} R\Gamma(X_{\bullet\leq N}, W^\dagger\Omega^\bullet_{X_{\bullet\leq N},k})\otimes \Q.$$
We will then prove vanishing of the higher cohomologies of the $\t{Rsp}_*\Omega^\bullet_{]X_{m}[^\dagger_{\Fa_{m}}}$ for this particular special frames in Proposition \ref{vanish} and show that
$$R\Gamma(X_{\bullet\leq N},R\t{sp}_*\Omega^\bullet_{]X_{\bullet\leq N}[^\dagger_{\Fa_{\bullet\leq N}}})$$
calculates $\Rig(X/K)$ for $N$ large enough. This last part, and showing independence of choices made and functoriality are quite, and are motivated by \cite{Nakkajima2012}.

In the course of the proof, we will need some tools from \cite{Davis2011} in order to compare special frames. The first result is proven in the proof of Proposition 4.35 and the second result is Proposition 4.37.
\begin{prop}\label{Dav sp_fr}
\begin{itemize}
\item[i)] Given a map of special frames
$$\xymatrix{X\ar[r]\ar@{=}[d] & F'\ar[d]\\ X\ar[r]& F}$$
with the right vertical map being \et ale, then we get a natural isomorphism of dagger spaces
$$]X[^\dagger_{\Fa'}\cong ]X[^\dagger_{\Fa}.$$
\item[ii)] Given a special frame $(X,F\times\A^n_{W(k)})$ for any $n$, such that the map $X\to \A^n_{W(k)}$ factors through the origin, there exists a natural quasi-isomorphism
$$\t{Rsp}_*\Omega^\bullet_{]X[^\dagger_{\Fa}}\stackrel{\sim}{\to}\t{Rsp}_*\Omega^\bullet_{]X[^\dagger_{\Fa\times \A^n_{\Wa}}}.$$
\end{itemize}
\end{prop}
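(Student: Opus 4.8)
The plan is to deduce both parts from \cite{Davis2011}, since $]X[^\dagger_{\Fa}$ and $\text{sp}_*$ are by construction exactly the dagger space and specialization map of \textit{loc.\ cit.}; below I indicate the mechanism in each case.

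For i), the content is the standard fact that an \'etale morphism of affine $p$-adic formal $W(k)$-schemes $\Fa'\to\Fa$ restricting to the identity on a common closed subscheme $X$ induces an isomorphism of tubes $]X[_{\Fa'}\,\xrightarrow{\sim}\,]X[_{\Fa}$. First I would note that \'etaleness is preserved by $p$-adic completion, and choose compatible embeddings of $F'$ and $F$ into affine and then projective space (or appeal to the intrinsic nature of the tube) so that the proper closures used to define rigid cohomology are compatible; then the point is that an \'etale map is an isomorphism on a formal, hence on a suitable strict, neighborhood of $X$. This is precisely what promotes the isomorphism of tubes to a bijection between the explicit fundamental systems of strict neighborhoods of $]X[_{\Fa'}$ and $]X[_{\Fa}$ used in \cite{Davis2011} to build the dagger structure, giving $]X[^\dagger_{\Fa'}\cong\,]X[^\dagger_{\Fa}$. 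Compatibility with $\text{sp}_*$ is immediate because $\text{sp}_*$ is independent of the chosen embedding, and naturality in morphisms of such squares of special frames is formal.

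For ii), set $G=F\times_{W(k)}\A^n_{W(k)}$ with projection $\pi:\hat{G}\to\Fa$ (note that $(X,F)$ is itself a special frame since $X\to F\times\A^n_{W(k)}$ factors through $F\times\{0\}$). Since $X\to\A^n_{W(k)}$ factors through the origin, the tube $]X[_{\hat{G}}$ is identified, compatibly with the strict-neighborhood systems defining the dagger structures, with the fibre product of $]X[_{\Fa}$ with the open residue polydisc around the origin; thus $]X[^\dagger_{\hat{G}}$ is a relative overconvergent polydisc over $]X[^\dagger_{\Fa}$. Its relative de Rham complex resolves the structure sheaf by the overconvergent Poincar\'e lemma for the open disc, so $R\pi_*\Omega^\bullet_{]X[^\dagger_{\hat{G}}}\simeq\Omega^\bullet_{]X[^\dagger_{\Fa}}$; applying $\text{Rsp}_*$ on $X$ and using that the two specialization maps to $X$ agree through $\pi$ gives the quasi-isomorphism $\text{Rsp}_*\Omega^\bullet_{]X[^\dagger_{\Fa}}\,\xrightarrow{\sim}\,\text{Rsp}_*\Omega^\bullet_{]X[^\dagger_{\hat{G}}}$, and naturality is again formal.

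The main obstacle in both parts lies not on the generic fibre but at the overconvergent/$\text{sp}_*$ level: one must verify that the isomorphism of tubes in i), and the polydisc identification together with the Poincar\'e homotopy in ii), respect the explicit fundamental systems of strict neighborhoods of \cite{Davis2011} and commute with $\text{sp}_*:]X[^\dagger\to X$. Since this is exactly what is carried out in \cite[Proof of Proposition 4.35]{Davis2011} and \cite[Proposition 4.37]{Davis2011}, in the final write-up I would simply cite those results.
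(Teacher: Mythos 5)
Your proposal is correct and matches the paper's treatment: the paper does not reprove these statements but attributes part i) to the proof of \cite[Proposition 4.35]{Davis2011} and part ii) to \cite[Proposition 4.37]{Davis2011}, which is exactly the citation you fall back on. Your sketch of the underlying mechanisms (invariance of tubes and their strict-neighborhood systems under \'etale maps inducing the identity on $X$, and the relative overconvergent Poincar\'e lemma for the polydisc through the origin) is consistent with the arguments in \textit{loc.\ cit.} and adds nothing that conflicts with the paper.
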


When proving the comparison, we will need to show vanishing of the higher cohomologies of $\t{Rsp}_*\Omega^\bullet_{]Y_m[^\dagger_{\Fa_m}}$ for $0\leq m\leq N$, where $(Y_m,F_m)$ are the special frames constructed in sections \ref{hypercovering} and \ref{frame}. The above proposition will allow us to reduce it to the following theorem, which follows from the proof of \cite[Theorem 1.10]{Berthelot1997}:
\begin{prop}\label{Ber1.10} Given a special frame $(X,F)$, where $F$ is a lifting of $X$ over $W(k)$, then
$$\t{R}^i\t{sp}_*\Omega^\bullet_{]X[^\dagger_{\Fa}}=0 \text{ for i>0}.$$
\end{prop}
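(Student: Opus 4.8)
The plan is to reduce the statement to the comparison between de Rham cohomology of the dagger space $]X[^\dagger_{\Fa}$ and the de Rham cohomology of the formal/rigid tube in the smooth proper situation, and then to invoke the local acyclicity already established by Berthelot. Concretely, since $(X,F)$ is a special frame with $F$ a smooth lift of $X$ over $W(k)$, the reduction map $\Fa \to F \to \Spec k$ identifies the special fibre of $\Fa$ with $X$ itself, so the specialization map $sp\colon ]X[^\dagger_{\Fa}\to X$ is the restriction of the usual specialization map of the smooth formal scheme $\Fa$ along the full (rather than a proper open) special fibre. In that setting $]X[_{\Fa}$ is the whole generic fibre $\Fa_K$ of the smooth affine formal scheme, and there is no overconvergence issue for the structure sheaf itself; the dagger structure only affects the question at the level of strict neighborhoods of a proper open, which here is everything.

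First I would unwind the definitions so that the claim becomes: for the smooth affine formal $W(k)$-scheme $\Fa$ with special fibre $X$, one has $\mathrm{R}^i sp_* \Omega^\bullet_{\Fa_K} = 0$ for $i>0$, where $sp\colon \Fa_K \to X$ is the specialization map and the complex is the overconvergent (dagger) de Rham complex. Second I would observe that this is exactly the content extracted in the proof of \cite[Theorem 1.10]{Berthelot1997}: Berthelot shows that for a smooth formal lift, the specialization of the de Rham complex of the tube computes the crystalline/rigid cohomology Zariski-locally, and the key geometric input is that $sp$ is, locally on $X$, modeled on the projection of a polydisc bundle, for which higher direct images of the relative de Rham complex vanish by an explicit Poincaré-lemma / partial-fractions computation. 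Third, since the statement is Zariski-local on $X$ (both $\mathrm{R}^i sp_*$ and the claim of vanishing can be checked on a basis of affine opens of $X$, and $F$ being smooth over $W(k)$ remains smooth after restriction), I would reduce to the case where $X$ is standard smooth and $F$ is its standard smooth lift, choose étale coordinates $F \to \A^n_{W(k)}$, and transport the vanishing from the polydisc case $\A^n$ through Proposition~\ref{Dav sp_fr}(i): the étale map $F\to \A^n_{W(k)}$ gives an isomorphism of dagger spaces $]X[^\dagger_{\Fa}\cong ]X'[^\dagger_{\widehat{\A^n}}$ over the corresponding $X'$, compatibly with $sp$, so $\mathrm{R}^i sp_*$ agrees and one is reduced to the coordinate case, which is precisely what Berthelot computes.

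I expect the main obstacle to be bookkeeping rather than a genuinely deep point: one must be careful that the dagger (overconvergent) structure on $]X[^\dagger_{\Fa}$ does not change the answer. The point to verify is that when the "proper open" in the Berthelot frame is all of $\overline{X}$'s relevant locus — i.e. when we localize to the situation where $X$ is the entire special fibre of the smooth lift — the overconvergent de Rham complex $\Omega^\bullet_{]X[^\dagger_{\Fa}}$ and its pushforward coincide (as objects of the derived category) with the naive de Rham complex of the tube, because $j^\dagger$ acts trivially; this is essentially \cite[Theorem 5.1]{Grosse-Klonne2000} in the degenerate case, or can be seen directly since strict neighborhoods of $]X[_{\Fa}$ inside $]X[_{\Fa}$ are cofinal with $]X[_{\Fa}$ itself. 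Once that identification is in place, the vanishing is literally \cite[Theorem 1.10]{Berthelot1997} applied to the lift $\Fa$, and the proof concludes. I would also double-check the edge case $p \mid$ something or $X$ not connected, but those cause no trouble since everything is local and the standard smooth lifting is functorial.
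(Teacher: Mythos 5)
The paper offers no argument of its own here: Proposition~\ref{Ber1.10} is simply quoted as following from the proof of \cite[Theorem 1.10]{Berthelot1997}, so your instinct to end on that citation is the right one. The problem is the reduction you build in front of it, which rests on a false claim: you assert that since $X$ is the whole special fibre of $\Fa$, the dagger structure is irrelevant (``$j^\dagger$ acts trivially'', ``strict neighborhoods of $]X[_{\Fa}$ inside $]X[_{\Fa}$ are cofinal with $]X[_{\Fa}$''). That is not how the dagger structure of Section~\ref{special + dagger} is defined: the strict neighborhoods are taken inside $]\overline{X}[_{\Qa}$, the tube of the compactification $\overline{X}$ of $X$ in the closure $Q$ of $F$ in $\P^n_{W(k)}$, and unless $X$ is proper these are strictly larger than $]X[_{\Fa}$. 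For example, for $X=\A^1_k$ and $F=\A^1_{W(k)}$ the underlying rigid space of $]X[^\dagger_{\Fa}$ is the closed unit disc, but $\mathrm{sp}_*\Omega^0_{]X[^\dagger_{\Fa}}(X)$ is the weak (Washnitzer) completion $W(k)\langle T\rangle^\dagger\otimes\Q$, not the Tate algebra; the overconvergence toward $\overline{X}\setminus X$ is precisely the content of the proposition, and replacing $\Omega^\bullet_{]X[^\dagger_{\Fa}}$ by the naive de Rham complex of the generic fibre of $\Fa$ changes the statement (it would yield convergent rather than Monsky--Washnitzer-type data, and would not map to $W^\dagger\Omega^\bullet_{X/k}\otimes\Q$ as needed later). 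So the vanishing cannot be ``unwound'' into a statement with no overconvergence; what Berthelot's proof actually supplies, and what your argument skips, is the vanishing for the overconvergent complex itself, obtained by writing sections over an affine open $U\subseteq X$ as a direct limit over strict neighborhoods and using affinoid acyclicity there. Also, \cite[Theorem 5.1]{Grosse-Klonne2000} compares global de Rham cohomology of the dagger space with the $j^\dagger$-cohomology of the tube; it does not identify the dagger structure sheaf with the rigid one, even ``in the degenerate case''.

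A secondary issue is your use of Proposition~\ref{Dav sp_fr}(i) to ``reduce to the polydisc case''. That proposition compares two special frames $(X,F')\to (X,F)$ with the \emph{same} $X$ and an \'etale morphism $F'\to F$; composing $X\into F$ with an \'etale map $F\to \A^n_{W(k)}$ does not produce a special frame (the composite is not a closed immersion), nor an identification of $]X[^\dagger_{\Fa}$ with the tube of the image of $X$ in the completed affine space. In the paper, the \'etale and affine-space reductions of Proposition~\ref{Dav sp_fr} are used inside Proposition~\ref{vanish} to arrive \emph{at} the hypothesis of Proposition~\ref{Ber1.10}, namely a frame in which $F$ is an honest smooth lift of $X$; once there, no further reduction is made, and the statement is exactly the case handled by Berthelot's proof for the overconvergent de Rham complex. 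As written, your proof both misapplies that reduction and, more importantly, proves (at best) the analogous statement for the non-overconvergent complex, which is not the proposition.
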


We now prove a key ingredient of the comparison theorem:

\begin{prop}\label{vanish} Given an $N$-truncated simplicial frame $(Y_{\bullet\leq N},F_{\bullet\leq N})$ as in (\ref{the frame}), for $0\leq m\leq N$ and $i>0$,
$$\t{R}^i\t{sp}_*\Omega^\bullet_{]Y_m[^\dagger_{\Fa_m}}=0.$$
\end{prop}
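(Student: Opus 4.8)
The plan is to reduce the statement to Proposition \ref{Ber1.10} by carefully analyzing the structure of the frame $F_m = \Gamma_N^{W(k)}(E_N)_m$ and repeatedly applying Proposition \ref{Dav sp_fr}. Recall that by construction $F_m = \prod_{\phi:[N]\to[m]} E_N$, a finite product of copies of $E_N = \bigsqcup_{\psi:[N]\surj[\ell]} NE_{N,\psi}$ indexed over \emph{all} maps $\phi:[N]\to[m]$, while $Y_m = \bigsqcup_{\alpha:[m]\surj[\ell]} NY_{m,\alpha}$ is indexed over \emph{surjections} out of $[m]$. The closed immersion $Y_m\hookrightarrow F_m$ is the one induced, via the universal property of $\Gamma_N^{W(k)}$, by the composite $Y_N = \bigsqcup_\psi NY_{N,\psi}\hookrightarrow E_N$; on each component $NY_{m,\alpha}$ of $Y_m$ one lands in the product over all $\phi:[N]\to[m]$, landing in the component of $E_N$ indexed by the image factorization of $\phi$ composed with a fixed surjection $[N]\surj[m]$. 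The point is that, since $NY_{m,\alpha}$ is a disjoint union of finitely many copies of standard smooth open subschemes of $X$ and $NE_{N,\psi}$ is the chosen standard-smooth lift, many of the product factors are \emph{not} needed to cut out $Y_m$ scheme-theoretically.

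First I would fix a connected component $V$ of $Y_m$; it suffices to prove the vanishing after restricting to $V$, since $Y_m$ is a disjoint union of affines and $\t{sp}_*$, $\Omega^\bullet$ and cohomology all commute with such disjoint unions. Such a $V$ is a standard smooth open subscheme of $X$, say $V = \Spec A$ with chosen lift $\Spec\tilde A$ over $W(k)$. The restriction of the frame to $V$ is a special frame $(V, F_m\times_{E_N}\,?)$ — more precisely, $V$ sits in $F_m$ via the product of its various structure maps into the copies of $E_N$, and exactly one of these (the one corresponding to the identity factorization through $[m]$, coming from the canonical map $Y_m\to Y_N$ on that component, or rather the relevant degeneracy) realizes $V$ as a closed subscheme of a single copy of a connected component $NE_{N,\psi_0}$. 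That distinguished factor is \emph{\'etale} over the standard smooth lift $\tilde A$ of $V$: indeed $NE_{N,\psi_0}$ is a disjoint union of standard smooth opens, and the projection of $V$ into the appropriate component, followed by localization, identifies with the \'etale (in fact open) map from $\tilde V$. So after replacing $F_m$ by the appropriate component containing the image of $V$, Proposition \ref{Dav sp_fr}(i) lets me replace that factor by $\tilde V$ itself.

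The remaining factors of the product $F_m$ each contribute, after restricting to $V$, a map $V\to E_N$ which (composed with $V\hookrightarrow\tilde V$) need not be anything canonical, but I can enlarge the frame rather than shrink it: the total frame containing $V$ is, up to the identification just made, of the form $\tilde V\times_{W(k)}\big(\prod_{\text{other }\phi} NE_{N,\psi_\phi}\big)$, and each $NE_{N,\psi_\phi}$ is itself \'etale over some $\A^{r_\phi}_{W(k)}$. Pulling these \'etale maps back (Proposition \ref{Dav sp_fr}(i) again, applied one factor at a time over the partially-reduced frame) replaces each such factor by an affine space $\A^{r_\phi}_{W(k)}$, and one must check that the composite $V\to NE_{N,\psi_\phi}\to\A^{r_\phi}_{W(k)}$ can be arranged to land through the origin — this follows from the functorial choice of lifts recorded in the Standard Smooth Schemes subsection, where the representation of a map of standard smooth algebras is chosen with extra coordinates $T_i-\alpha(T_i)$ whose lifts vanish appropriately, so the image of the closed point structure is the origin after an affine translation, which we are free to perform. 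Then Proposition \ref{Dav sp_fr}(ii), applied iteratively to strip off the factors $\A^{r_\phi}_{W(k)}$ one at a time, yields a natural quasi-isomorphism $\t{Rsp}_*\Omega^\bullet_{]V[^\dagger_{\Fa_m,V}}\simeq \t{Rsp}_*\Omega^\bullet_{]V[^\dagger_{\tilde V}}$, and the right-hand side has vanishing higher $\t{R}^i\t{sp}_*$ by Proposition \ref{Ber1.10} since $\tilde V$ is a lifting of $V$ over $W(k)$.

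The main obstacle I anticipate is the bookkeeping in the second and third steps: identifying precisely which factor of $F_m = \prod_{\phi:[N]\to[m]}E_N$ realizes a given component $V$ of $Y_m$ as a closed subscheme of a single connected standard-smooth piece (so that Proposition \ref{Dav sp_fr}(i) applies to make it \'etale, hence reducible to $\tilde V$), and arranging the ``factors through the origin'' hypothesis for all the remaining factors simultaneously, which requires tracking the functorial lifting conventions of the Standard Smooth Schemes subsection through the $\Gamma_N^{W(k)}$ construction. Once the combinatorial structure of the closed immersion $Y_m\hookrightarrow F_m$ is pinned down component-by-component, the analytic input is entirely supplied by Propositions \ref{Dav sp_fr} and \ref{Ber1.10}, and the argument is a finite induction on the number of product factors.
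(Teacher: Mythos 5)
Your proposal is correct and takes essentially the same approach as the paper: fix a degeneracy $\sigma:[N]\surj[m]$ singling out one factor of $F_m=\prod_{\phi:[N]\to[m]}E_N$ in which $Y_m$ is already closed, replace the complementary standard-smooth factors by affine space via Proposition \ref{Dav sp_fr}(i), translate so that the map to affine space factors through the origin and strip it off with Proposition \ref{Dav sp_fr}(ii), then identify each split component with its chosen lift inside $E_N$ and conclude by Proposition \ref{Ber1.10}. The only differences are organizational: the paper strips all complementary factors at once (their product is again standard smooth, hence \'etale over a single $\A^n_{W(k)}$) before decomposing into split components, and it justifies the origin condition purely by the coordinate translation $T_i\mapsto T_i-b_i$ using surjectivity of $\mathcal{O}(E_\sigma)\to\mathcal{O}(Y_m)$, rather than by the functorial lifting conventions you invoke.
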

\begin{proof}
Pick any $0\leq m\leq N$. By splitness of $Y_\bullet$, we may write
$$Y_m=\bigsqcup\limits_{\phi: [N]\surj [m]} NY_{m,\phi}.$$
Fix some degeneracy map $\sigma: [N]\surj [m]$. Then, by construction of $\Gamma_N^{W(k)}(-)$, we have a commutative diagram
$$\xymatrix{Y_m\ar[d]_{Y_\bullet(\sigma)}\ar[r] & F_m=\prod\limits_{\alpha: [N]\to [m]} E_\alpha\ar[d]^(0.6){p_\sigma}\\
Y_N\ar[r] & E_N=E_\sigma}$$
where $E_\sigma=E_N$ was defined in section \ref{frame}, and $p_\sigma$ is the projection, and both horizontal maps and the left vertical map are closed immersions. This gives us a closed immersion 
$$Y_m\into E_\sigma.$$

Let 
$$F'_m:=\prod\limits_{\alpha: [N]\to [m], \alpha\neq \sigma} E_\alpha,$$
so $F_m=F'_m\times E_\sigma$. Then, since each of the $E_\alpha$ are standard smooth schemes over $W(k)$, so is their product, and we may get an \et ale morphism
$$F'_m\to \A^n_{W(k)}$$
for some $n$. Thus, considering the commutative diagram
$$\xymatrix{Y_m\ar[r]\ar@{=}[d] & F'_m\times E_\sigma\ar[d]\ar@{=}[r] & F_m\\ Y_m\ar[r]& \A^n_{W(k)}\times E_\sigma}$$
where the right vertical morphism is \et ale, using Proposition \ref{Dav sp_fr}.i) we may reduce to the case of the special frame $(Y_m,\A^n_{W(k)}\times E_\sigma)$. Furthermore, we may assume that the map $Y_m\to \A^n_{W(k)}$ factors through the origin. To see this, write $Y_m=\Spec (A)$ and $E_\sigma=\Spec B$, so $\A^n_{W(k)}\times E_\sigma=\Spec B[T_1,...,T_n]$. Then, since $B\surj A$ is surjective (as $Y_m\to E_\sigma$ is a closed immersion), we may pick $b_1,...,b_n\in B$ which map to the images of $T_1,...,T_n$ respectively in $A$, and replace $T_i$ by $T'_i:=T_i-b_i$, giving a special frame
$$(Y_m,\Spec B[T'_1,...,T'_n])=(Y_m,\A^n_{W(k)}\times E_\sigma)$$
factoring through the origin. Thus, by Proposition \ref{Dav sp_fr}.ii), we reduce the proof to the special frame $(Y_m,E_\sigma)$. 

Now, since
$$]Y_m[^\dagger_{\mathcal{E}_\sigma}=]\bigsqcup NY_{m,\phi}[^\dagger_{\mathcal{E}_\sigma}\cong \bigsqcup ]NY_{m,\phi}[^\dagger_{\mathcal{E}_\sigma}$$
we may reduce to studying the special frames $(NY_{m,\phi},E_\sigma)$ for any $\phi:[m]\surj [k]$ and $0\leq k\leq m$. But notice that by the construction of the frame, for any $\phi:[m]\surj [k]$, we have a commutative diagram
$$\xymatrix{NY_{m,\phi}\ar@{}[d]|*[@]{\cong}\ar@{}[r]|-*[@]{\subset} & Y_m\ar@{^{(}->}[d]^{Y_\bullet(\sigma)}\\
NY_{N,\phi\circ \sigma}\ar@{}[r]|-*[@]{\subset} & Y_N\ar@{^{(}->}[r] & E_\sigma\ar@{=}[r] & E_N\ar@{=}[r] & \bigsqcup NE_{N,\psi}}$$
where $\alpha$ vary over all morphisms $\psi: [N]\surj [k']$ with $0\leq k'\leq N$, and the composite map $NY_{m,\phi}\to E_N$ is the map giving the special frame. Thus,
$NY_{m,\phi}$ is isomorphic to $NE_{N,\phi\circ \sigma}\subset E_N$, and therefore
$$\t{sp}\inv(NY_{m,\phi})=]NY_{m,\phi}[^\dagger_{\mathcal{E}_\sigma}=]NY_{m,\phi}[^\dagger_{N\mathcal{E}_{N,\phi\circ \sigma}},$$
which reduces the proof to the case of the special frame $(NY_{m,\phi},NE_{N,\phi\circ \sigma})$.

But by construction, $NE_{N,\phi\circ \sigma}$ is a smooth lift of $NY_{N,\phi\circ \sigma}\cong NY_{m,\phi}$ over $W(k)$, and thus we can apply Proposition \ref{Ber1.10} to complete the proof.
\end{proof}

We will need the following to deal with $N$-truncations, which basically says that for some large enough $N$, we only need the $N$-skeleton in the calculations of cohomologies on simplicial objects (such as for rigid cohomology and overconvergent Witt de-Rham). For a complex $A^\bullet$ of $K$ vector spaces, and any $h$, consider the $h$-truncated complex
$$\tau_{\leq h}(A^\bullet)^i=\begin{cases} A^i \text{ if }i<h\\ \ker(A^h\to A^{h+1}) \text{ if } i>h \\ 0 \text{ else.}\end{cases}$$
For a double complex $A^{\bullet\bullet}$, let $\tau_{\leq h}^{(1)}(A^{\bullet q}):=\tau_{\leq h}(A^{\bullet q})$, and let $s: C(K)\to K$ be the total complex map.

\begin{lem}\label{N-trunc} \cite[Lemma 2.2]{Nakkajima2012} Consider a double complex $A^{\bullet,\bullet}$ of $K$ vector spaces such that $A^{p,q}=0$ for $p<0$ or $q<0$. Given any 
\begin{align}\label{N,h}N>\max\{i+(h-i+1)(h-i+2)/2 \mid 0\leq i\leq h\}=(h+1)(h+2)/2,\end{align}
 the natural maps $s(\tau^{(1)}_{\leq N}(A^{\bullet\bullet})\to s(A^{\bullet\bullet})$ induce a quasi-isomorphism
$$\tau_{\leq h}(s(\tau^{(1)}_{\leq N}(A^{\bullet\bullet}))\stackrel{\sim}{\to} \tau_{\leq h}(s(A^{\bullet\bullet})).$$
\end{lem}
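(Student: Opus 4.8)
The plan is to reduce the statement to a purely homological computation about the filtration of the total complex of $A^{\bullet\bullet}$ by columns, and then track the effect of the two truncations $\tau^{(1)}_{\leq N}$ (in the first variable, i.e.\ columns) and $\tau_{\leq h}$ (on the total complex). First I would observe that the map $\tau^{(1)}_{\leq N}(A^{\bullet\bullet})\to A^{\bullet\bullet}$ of double complexes is a quasi-isomorphism in each row, \emph{not} in each column; so the induced map on total complexes is generally \emph{not} a quasi-isomorphism, and the whole content of the lemma is that it becomes one after applying $\tau_{\leq h}$, provided $N$ is large compared to $h$. The natural tool is the spectral sequence of the column filtration: for a double complex $B^{\bullet\bullet}$ with $B^{p,q}=0$ for $p<0$ or $q<0$, one has $E_1^{p,q}=H^q_{\mathrm{II}}(B^{p,\bullet})$ converging to $H^{p+q}(s(B^{\bullet\bullet}))$, and a class in total degree $n$ is detected by differentials $d_r$ of bidegree $(r,1-r)$ for $r\geq 1$, which can only be nonzero when both source and target lie in the first quadrant.

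The key steps, in order, are as follows. (1) Set $B^{\bullet\bullet}=A^{\bullet\bullet}$ and $B'^{\bullet\bullet}=\tau^{(1)}_{\leq N}(A^{\bullet\bullet})$, and compare their column spectral sequences; the inclusion $B'\hookrightarrow B$ is an isomorphism on $E_1^{p,q}$ for $p<N$, an injection with cokernel-controlled behavior at $p=N$, and $E_1^{p,q}(B')=0$ for $p>N$. (2) Fix a total degree $n\leq h$ and chase which entries $E_1^{p,q}$ with $p+q=n$ can still support a nonzero class on the $E_\infty$-page: starting from $E_1^{n,0}$, a nonzero $d_1$ lands in $E_1^{n+1,0}$, a surviving class can be hit by $d_2$ from $E_2^{n-2,1}$, etc. Unwinding the combinatorics of how far up and to the right the relevant differentials can reach from a fixed antidiagonal $p+q=n$ with $n\leq h$ gives exactly the bound $N>i+(h-i+1)(h-i+2)/2$ for $0\leq i\leq h$, i.e.\ $N>(h+1)(h+2)/2$: beyond column $N$ nothing in the range of total degrees $\leq h+1$ can be affected. (3) Conclude that $H^n(s(B'))\to H^n(s(B))$ is an isomorphism for $n<h$ and injective for $n=h$ — equivalently that $\tau_{\leq h}(s(B'))\to\tau_{\leq h}(s(B))$ is a quasi-isomorphism — by comparing the two spectral sequences degree by degree up to the cutoff, using that they agree on all $E_1$ entries that can possibly contribute.

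I expect the main obstacle to be step (2): pinning down \emph{exactly} which differentials $d_r$ on which pages can move a class off of column $\leq N$ while still affecting cohomology in total degree $\leq h$, and checking that the stated numerical bound is sharp enough. This is a bookkeeping argument about staircase patterns in the first quadrant — one must be careful that a class in total degree $n$ can interact (via a zig-zag of $d_r$'s on successive pages, each time dropping the row by $r-1$ and raising the column by $r$) with entries as far right as roughly column $n + \sum_{j=2}^{?} j$, and translate the worst case over all $0\le i\le h$ into the quadratic bound. Since the statement is quoted verbatim as \cite[Lemma 2.2]{Nakkajima2012}, the cleanest writeup is simply to cite that reference and sketch the spectral-sequence argument above; a fully self-contained proof would require spelling out the staircase estimate, which is routine but tedious.
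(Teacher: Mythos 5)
The paper does not actually prove this lemma: it is imported verbatim from \cite[Lemma 2.2]{Nakkajima2012} and used as a black box, so your sketch already does more than the paper itself. Your spectral-sequence route would work, but the staircase bookkeeping you identify as the main obstacle is not actually needed, and your picture of it is slightly off: a single class does not chain through successive pages accumulating column shifts $1+2+\cdots$; any differential $d_r$ out of a position $(p,q)$ with $p+q=n\leq h$ lands in column $p+r\leq p+q+1=n+1\leq h+1$ (one needs $q-r+1\geq 0$ for the target to be in the first quadrant), and differentials into $(p,q)$ come from strictly smaller columns, so everything affecting total degrees $\leq h$ happens in columns $\leq h+1$, far to the left of $N$. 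In fact one can avoid spectral sequences entirely: $\tau^{(1)}_{\leq N}(A^{\bullet\bullet})$ is a sub-double complex of $A^{\bullet\bullet}$ whose quotient $C^{\bullet\bullet}$ vanishes in columns $p<N$, hence $s(C^{\bullet\bullet})$ is concentrated in total degrees $\geq N$, and the long exact sequence shows $H^n(s(\tau^{(1)}_{\leq N}A^{\bullet\bullet}))\to H^n(s(A^{\bullet\bullet}))$ is an isomorphism for all $n\leq N-1$; since the hypothesis gives $N\geq h+2$, applying $\tau_{\leq h}$ yields the quasi-isomorphism. The quadratic bound is thus merely a safe sufficient condition carried over from Nakkajima's setting; $N>h$ already suffices for the statement as written here.

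Two small inaccuracies in your write-up, neither fatal: the inclusion $\tau^{(1)}_{\leq N}(A^{\bullet\bullet})\to A^{\bullet\bullet}$ is not a quasi-isomorphism on each row (canonical truncation preserves row cohomology only in degrees $\leq N$ and kills it above), and on the $E_1$-page of the column filtration the map in column $N$ need not be injective, since a vertical cycle lying in $\ker(A^{N,q}\to A^{N+1,q})$ may become a vertical boundary in the full column $A^{N,\bullet}$. Neither affects the conclusion, because column $N$ never interacts with total degrees $\leq h$ under the stated hypothesis; and simply citing \cite[Lemma 2.2]{Nakkajima2012}, as you suggest, is exactly what the paper does.
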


From this, and the formation of the spectral sequence for cohomology on simplicial objects, it follows for example that for some simplicial rigid frame $(Z_\bullet,\overline{Z}_\bullet,\Za_\bullet)$, and $h$ and $N$ as in (\ref{N,h}), we get natural quasi-isomorphisms
$$\tau_{\leq h}R\Gamma(]\overline{Z}_{\bullet\leq N}[_{\Za_{\bullet\leq N}},j^\dagger\Omega^\bullet_{]\overline{Z}_{\bullet\leq N}[_{\Za_{\bullet\leq N}}})\stackrel{\sim}{\to} \tau_{\leq h}R\Gamma(]\overline{Z}_{\bullet}[_{\Za_{\bullet}},j^\dagger\Omega^\bullet_{]\overline{Z}_{\bullet}[_{\Za_{\bullet}}})$$
and that for a smooth simplicial scheme $X_\bullet$,
$$\tau_{\leq h}R\Gamma(X_\bullet,W^\dagger\Omega^\bullet_{X_{\bullet/k}})\stackrel{\sim}{\to} \tau_{\leq h}R\Gamma(X_{\bullet\leq N},W^\dagger\Omega^\bullet_{X_{\bullet\leq N/k}}).$$

This is useful by the following theorem of vanishing of rigid cohomology:

\begin{thm}\label{vanish rigid}\cite[Theorem 6.4.1]{Tsuzuki2004}  Given a scheme $X$ over $k$, there exists an integer $c$ such that for $i>c$, $H^i_{rig}(X/K)=0$. 
\end{thm}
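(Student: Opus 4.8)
The plan is to reduce the assertion, by a d\'evissage on $\dim X$, to the case of a smooth scheme, where the bound $i > 2\dim X$ is nearly immediate. First I would use that rigid cohomology is insensitive to nilpotent thickenings in order to assume $X$ reduced, and observe that it is enough to treat $X$ affine: for a finite affine open cover $X = \bigcup_{l=1}^{m} V_l$ the intersections $V_I$ are again affine (since $X$ is separated), so the associated \v{C}ech spectral sequence $E_1^{p,q} = \bigoplus_{|I|=p+1} H^q_{rig}(V_I/K) \Rightarrow H^{p+q}_{rig}(X/K)$ turns a vanishing bound $c_0$ valid for affine schemes into the bound $c_0 + m - 1$ for $X$. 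For $X$ smooth of dimension $d$ I would argue directly: its connected components $X_\alpha$ are integral of dimensions $d_\alpha \le d$, and Poincar\'e duality for rigid cohomology --- which depends on the finiteness of rigid cohomology with and without compact support (Berthelot, Grosse-Kl\"onne, Kedlaya) --- gives $H^i_{rig}(X_\alpha/K) \cong H^{2d_\alpha - i}_{rig,c}(X_\alpha/K)^\vee(-d_\alpha)$, which vanishes for $2d_\alpha - i < 0$ because $H^\bullet_{rig,c}$ is the hypercohomology of a complex in non-negative degrees; hence $H^i_{rig}(X/K)=0$ for $i > 2d$. (For $X$ smooth affine one could instead invoke Monsky--Washnitzer theory: $X$ lifts to a smooth affine $W(k)$-scheme, and $H^\bullet_{rig}(X/K)$ is computed by the de Rham complex of a weak completion, which sits in degrees $0,\dots,d$.)

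For a general reduced affine $X$ I would induct on $d = \dim X$, the case $d=0$ being the smooth case. For $d > 0$, generic smoothness over the perfect field $k$ produces a dense open smooth $U := X_{\mathrm{sm}}$ with closed complement $Z$ of dimension $\le d-1$, and I would use the excision triangle
$$R\Gamma_{rig,Z}(X/K) \longrightarrow \Rig(X/K) \longrightarrow \Rig(U/K) \xrightarrow{+1}$$
By the smooth case $H^i_{rig}(U/K)=0$ for $i > 2d$, so everything comes down to bounding the cohomology with supports $H^i_{rig,Z}(X/K)$. For that I would pick a finite stratification of $Z$ by smooth locally closed subschemes $S_j$ of dimension $\le \dim Z$, arranged so that unions of initial segments are closed; this yields a finite filtration of $R\Gamma_{rig,Z}(X/K)$ whose graded pieces are cohomologies with support $R\Gamma_{rig,S_j}(X_j/K)$ along the smooth strata $S_j$, taken inside suitable locally closed subschemes $X_j \subseteq X$. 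Each such piece I would bound by means of Berthelot's formalism of rigid cohomology with supports --- purity, Gysin and duality, again resting on finiteness --- which identifies it, up to a shift and a Tate twist, with the ordinary rigid cohomology of a scheme of dimension $< d$; the inductive hypothesis then gives a bound. Chasing these bounds through the long exact sequences produces a constant $c = c(X)$ with $H^i_{rig}(X/K)=0$ for $i > c$, and in fact one expects $c = 2\dim X$ to be admissible.

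The hard part will be the treatment of the cohomology-with-supports term $H^\bullet_{rig,Z}(X/K)$ in exactly the situation where the ambient scheme $X$ is singular: the purity and Gysin isomorphisms are only directly available for closed immersions of smooth schemes, so the stratification has to be organized to keep the relevant ambient spaces open in $X$ --- hence still singular --- and to exploit the full duality formalism rather than naive purity. It is also worth stressing what will \emph{not} help: neither the proper cohomological descent spectral sequence attached to a de Jong alteration $X' \to X$ with $X'$ smooth, nor any \v{C}ech or hypercovering spectral sequence, yields vanishing in high degrees, because such spectral sequences have unboundedly many columns and therefore only constrain cohomology in low degrees. The high-degree vanishing genuinely requires the dimension-decreasing d\'evissage above, and with it the finiteness theorem and Berthelot's theory of cohomology with supports as inputs.
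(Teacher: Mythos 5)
First, note that the paper does not prove this statement at all: it is quoted verbatim from Tsuzuki (Theorem 6.4.1 of the cited work) and used as a black box, so there is no internal proof to compare with; your proposal has to be judged against the cited literature.

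Your smooth case is fine (Poincar\'e duality plus vanishing of compactly supported cohomology in negative degrees, or Monsky--Washnitzer), and the reductions to $X$ reduced and affine are harmless. The genuine gap is exactly the step you flag yourself: bounding $H^i_{rig,Z}(X/K)$ when the ambient $X$ is singular. Berthelot's purity/Gysin isomorphisms for cohomology with supports are only available for closed immersions of \emph{smooth} pairs, and there is no duality formalism for a singular ambient scheme that identifies $R\Gamma_{rig,S_j}(X_j/K)$ with (a shift and twist of) the rigid cohomology of a lower-dimensional scheme. Your stratification decreases the dimension of the \emph{support} but never improves the ambient scheme, which stays an open subscheme of the same singular $X$; in fact bounding $H^i_{rig,Z}(X/K)$ is, via the very excision triangle you use, equivalent to the original problem, so the induction does not close. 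As written, the proof is incomplete at its crucial step.

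Moreover, the closing claim that hypercovering spectral sequences ``will not help'' is the opposite of how the cited result is actually obtained. One chooses a split proper hypercovering $Y_\bullet\to X$ with each level smooth and with the non-degenerate parts satisfying $\dim NY_n\le \dim X-n$ (a de Jong/Deligne-style construction); after passing to the normalized complex, the $E_1$-term of the descent spectral sequence in column $p$ only involves $H^q_{rig}(NY_p/K)$, which by your smooth case vanishes for $q>2(\dim X-p)$, hence $E_2^{p,q}=0$ whenever $p+q>2\dim X$. Combined with Tsuzuki's proper cohomological descent theorem this yields $H^i_{rig}(X/K)=0$ for $i>2\dim X$. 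So the unbounded number of columns is not an obstruction once the dimensions of the non-degenerate levels drop; this descent route is precisely what replaces the purity step your dévissage cannot supply.
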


We can now prove the main comparison theorem:

\begin{thm}\label{comparison theorem}
Given a smooth scheme $X$ over $k$, there exists a functorial quasi-isomorphism
$$\t{R}\Gamma_{rig}(X/K)\stackrel{\sim}{\to} \t{R}\Gamma(X, W^\dagger \Omega^\bullet_{X/k})\otimes \Q.$$
\end{thm}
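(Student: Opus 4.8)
The plan is to assemble the pieces already in hand — the special hypercovering of Section~\ref{hypercovering}, the $N$-truncated simplicial special frame $(Y_{\bullet\leq N},F_{\bullet\leq N})$ of Section~\ref{frame}, the vanishing result Proposition~\ref{vanish}, the truncation lemma Lemma~\ref{N-trunc}, and the cohomological dimension bound Theorem~\ref{vanish rigid} — into a chain of quasi-isomorphisms, and then to argue that the resulting map is independent of choices and functorial in $X$. First I would fix a smooth $X/k$ and invoke Theorem~\ref{vanish rigid} together with the vanishing of $\t{R}\Gamma(X,W^\dagger\Omega^\bullet_{X/k})\otimes\Q$ above some degree $c$ to choose $h\geq c$, and then choose $N$ satisfying~(\ref{N,h}). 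By Proposition~\ref{sp hypercovering}.i) pick a special hypercovering $Y_\bullet\to X$, by Proposition~\ref{simplicial compactification}.i) extend it to a split \'etale-proper hypercovering $(Y_\bullet,\overline{Y}_\bullet)\to(X,\overline{X})$ for a fixed compactification, and by Section~\ref{frame} form the $N$-truncated special frame $(Y_{\bullet\leq N},F_{\bullet\leq N})$ with its dagger spaces $]Y_{\bullet\leq N}[^\dagger_{\Fa_{\bullet\leq N}}$ and rigid frame $(Y_{\bullet\leq N},\overline{Y}_{\bullet\leq N},\Qa_{\bullet\leq N})$.

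Next I would run the comparison at the truncated simplicial level. The functorial quasi-isomorphism~(\ref{dagger to oWdR}) of \cite{Davis2011}, applied levelwise for $0\leq m\leq N$, gives a quasi-isomorphism of complexes of Zariski sheaves on $Y_{\bullet\leq N}$, hence
$$R\Gamma(Y_{\bullet\leq N},\t{sp}_*\Omega^\bullet_{]Y_{\bullet\leq N}[^\dagger_{\Fa_{\bullet\leq N}}})\stackrel{\sim}{\to} R\Gamma(Y_{\bullet\leq N},W^\dagger\Omega^\bullet_{Y_{\bullet\leq N}/k})\otimes\Q.$$
On the left, Proposition~\ref{vanish} says $\t{R}^i\t{sp}_*\Omega^\bullet_{]Y_m[^\dagger_{\Fa_m}}=0$ for $i>0$, so $R\t{sp}_*\Omega^\bullet_{]Y_m[^\dagger_{\Fa_m}}\cong \t{sp}_*\Omega^\bullet_{]Y_m[^\dagger_{\Fa_m}}$ and the left-hand side agrees with $R\Gamma(]Y_{\bullet\leq N}[^\dagger_{\Fa_{\bullet\leq N}},\Omega^\bullet)$, which by~(\ref{dagger to rigid}) (levelwise, via \cite[Theorem 5.1]{Grosse-Klonne2000}) computes $R\Gamma(]\overline{Y}_{\bullet\leq N}[_{\Qa_{\bullet\leq N}},j^\dagger\Omega^\bullet)$. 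Now I apply Lemma~\ref{N-trunc} to the double complex computing simplicial cohomology: since $N$ satisfies~(\ref{N,h}), $\tau_{\leq h}$ of the $N$-truncated simplicial rigid cohomology agrees with $\tau_{\leq h}$ of the full simplicial rigid cohomology $R\Gamma(]\overline{Y}_\bullet[_{\Qa_\bullet},j^\dagger\Omega^\bullet)$, which by cohomological descent (the hypercovering being universally de Rham descendable, as in \cite{Chiarellotto2003}) equals $\tau_{\leq h}\Rig(X/K)$; and since $h\geq c$ this is all of $\Rig(X/K)$ in $D_+(K)$. The same $\tau_{\leq h}$ argument on the Witt side identifies $\tau_{\leq h}R\Gamma(Y_{\bullet\leq N},W^\dagger\Omega^\bullet_{Y_{\bullet\leq N}/k})\otimes\Q$ with $R\Gamma(X,W^\dagger\Omega^\bullet_{X/k})\otimes\Q$, using that $W^\dagger\Omega^\bullet$ satisfies \'etale (cohomological) descent so that the simplicial cohomology of the special hypercovering recovers that of $X$. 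Stringing these together yields the desired quasi-isomorphism for each fixed choice of data.

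Finally I would establish independence of choices and functoriality, which I expect to be the main obstacle. Given two sets of choices $(Y_\bullet,\overline{Y}_\bullet,N)$ and $(Y'_\bullet,\overline{Y}'_\bullet,N')$, use Proposition~\ref{sp hypercovering}.ii) to find a common refining special hypercovering $Y''_\bullet$, enlarge $N$ if necessary, extend to compactifications compatibly via Proposition~\ref{simplicial compactification}, and check that both original comparison maps factor through the one built from $Y''_\bullet$; the key inputs are that refinements of hypercoverings induce isomorphisms on cohomology (cohomological descent, independence of rigid frames from \cite{Chiarellotto2003}) and that the frame-level maps of \cite{Davis2011} — in particular Proposition~\ref{Dav sp_fr} — are themselves functorial, so the diagrams commute in $D_+(K)$. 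For a morphism $X\to X'$, Proposition~\ref{sp hypercovering}.iii) and Proposition~\ref{simplicial compactification}.ii) produce compatible special hypercoverings and compactifications; applying $\Gamma_N^{W(k)}(-)$ functorially to the lifts $E_N\to E'_N$ (which exist since morphisms of standard smooth algebras lift, as recalled in Section on standard smooth schemes) gives a morphism of $N$-truncated simplicial special frames, hence compatible dagger spaces and rigid frames, and the functoriality of~(\ref{dagger to rigid}), (\ref{dagger to oWdR}) and of cohomological descent makes the comparison square commute. The delicate point throughout is bookkeeping the truncation level $N$ under refinement and under morphisms — one must always pass to an $N$ large enough for \emph{all} schemes involved simultaneously — and verifying that the $\tau_{\leq h}$ identifications are natural in the data; once that is organized, the theorem follows.
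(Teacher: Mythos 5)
Your outline reproduces the parts of the argument that the paper also treats quickly (the levelwise application of (\ref{dagger to oWdR}), the use of Proposition \ref{vanish} to replace $\mathrm{Rsp}_*$ by $\mathrm{sp}_*$, the passage through (\ref{dagger to rigid}), and the truncation bookkeeping via Lemma \ref{N-trunc}), but it jumps over the actual core of the proof at the step where you claim that $R\Gamma(]\overline{Y}_{\bullet}[_{\Qa_{\bullet}},j^\dagger\Omega^\bullet)$ computes $\Rig(X/K)$ ``by cohomological descent (the hypercovering being universally de Rham descendable)''. This is not justified, for three reasons. First, a special hypercovering is an \emph{\'etale} hypercovering, and \'etale hypercoverings are not universally de Rham descendable in the sense of \cite[Definition 10.1.3]{Chiarellotto2003}; \'etale descent for rigid cohomology is a separate and much harder theorem there, whose proof is exactly the kind of refinement argument you are trying to avoid. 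Second, even the \'etale-proper descent machinery of \cite{Chiarellotto2003} is formulated for hypercoverings of a \emph{triple} $(X,\overline{X},\Xa)$, and for a general smooth $X$ (neither affine nor quasi-projective) there is no global formal $W(k)$-scheme $\Xa$ into which $\overline{X}$ embeds --- this absence is precisely the obstruction the whole paper is designed to get around, so it cannot be waved away by citing descent over $X$ itself. Third, the frames $\Qa_m$ produced by the Tsuzuki functor exist only for $m\leq N$, so the ``full simplicial rigid cohomology $R\Gamma(]\overline{Y}_\bullet[_{\Qa_\bullet},j^\dagger\Omega^\bullet)$'' you compare against is not even defined; one can only speak of the frame-independent rigid cohomology of each level, and identifying the truncated simplicial complex with $\tau_{\leq h}\Rig(X/K)$ requires an argument.

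What the paper actually does at this point is the bulk of its proof: it defines $\Rig(X/K)$ via a genuinely universally de Rham descendable hypercovering, namely the $0$-coskeleton $(U_\bullet,\overline{U}_\bullet,\mathcal{U}_\bullet)$ of an affine covering $\overline{U}$ of a compactification $\overline{X}$ with a formal embedding $\mathcal{U}$; it then refines the pullback of $(X_\bullet,\overline{X}_\bullet)$ over $(U,\overline{U})$ to an \'etale-proper hypercovering $(V_\bullet,\overline{V}_\bullet,\Va_\bullet)$ using \cite[Proposition 11.5.1]{Chiarellotto2003}, builds the bisimplicial triples $(V_{\bullet,\bullet},\overline{V}_{\bullet,\bullet},\Va_{\bullet,\bullet})$ and the auxiliary closures $\overline{W}_{m,n}$ with formal schemes $\Ra_{m,n}=\Va_{m,n}\hat{\times}_{\Wa}\Qa_n$, and proves the identifications (\ref{eq2})--(\ref{eq4}) by two spectral-sequence comparisons, one using \'etale-proper descent in the first simplicial direction and one using universal de Rham descent plus frame independence (\cite[Proposition 8.3.5]{Chiarellotto2003}) and finiteness of $H^q_{rig}(Y_p/K)$ in the second. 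None of this is present in your proposal, and without it the chain of quasi-isomorphisms does not close up. A secondary, smaller issue: you choose $h$ using a vanishing bound for $R\Gamma(X,W^\dagger\Omega^\bullet_{X/k})\otimes\Q$ in high degrees, which is not among the cited inputs; the paper avoids assuming this by letting $h$ vary and deducing the boundedness of the Witt side from the comparison itself together with Theorem \ref{vanish rigid}. Your treatment of independence of choices and functoriality is in the right spirit (common refinements via Proposition \ref{sp hypercovering} and Proposition \ref{simplicial compactification}), but as stated it only addresses the choices visible in your truncated argument; once the auxiliary data $\overline{X},\overline{U},\mathcal{U},(V_\bullet,\overline{V}_\bullet,\Va_\bullet)$ enter, their independence must be argued as well, as the paper does.
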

\begin{proof}
We form a special hypercovering
$$X_\bullet\to X.$$
For any $h$, we form an $N$-truncated special frame  
$$(X_{\bullet\leq N},F_{\bullet\leq N})$$
as explained in section \ref{frame}. Then, for $0\leq m\leq N$ and $i>0$, 
$$\t{R}^i\t{sp}_*\Omega^\bullet_{]X_m[^\dagger_{\Fa_m}}=0$$
by Proposition \ref{vanish}. 

Next, by \cite{Davis2011} we have natural isomorphisms of Zariski sheaves
$$\t{sp}_*\Omega^\bullet_{]X_m[^\dagger_{\Fa_m}}\stackrel{\sim}{\to} W^\dagger\Omega^\bullet_{X_m/k}\otimes \Q$$
giving a $N$-truncated simplicial version
$$\t{sp}_*\Omega^\bullet_{]X_{\bullet\leq N}[^\dagger_{\hat{F}_{\bullet\leq N}}}\stackrel{\sim}{\to} W^\dagger\Omega^\bullet_{X_{\bullet\leq N}/k}\otimes \Q.$$
Then, by vanishing of the higher $\t{R}^i\t{sp}_*$, and applying $\t{R}\Gamma(X,\t{R}\epsilon_*(-))$ to both sides we obtain
\begin{align}\label{eq1}R\Gamma(]X_{\bullet\leq N}[^\dagger_{\Fa_{\bullet\leq N}},\Omega^\bullet_{]X_{\bullet\leq N}[^\dagger_{\Fa_{\bullet\leq N}}})\stackrel{\sim}{\to} R\Gamma(X_{\bullet\leq N}, W^\dagger\Omega^\bullet_{X_{\bullet\leq N}})\otimes \Q.\end{align}

Then, by Lemma \ref{N-trunc} and $X_\bullet\to X$ being an \et ale hypercovering, we get
\begin{align}\label{eq6}\tau_{\leq h}(R\Gamma(X_{\bullet\leq N}, W^\dagger\Omega^\bullet_{X_{\bullet\leq N}}))\stackrel{\sim}{\to}\tau_{\leq h}(R\Gamma(X_{\bullet}, W^\dagger\Omega^\bullet_{X_{\bullet}/k}))\stackrel{\sim}{\leftarrow} \tau_{\leq h}(R\Gamma(X,W^\dagger\Omega^\bullet_{X/k})).\end{align}

To complete the proof, we will need to show that the left hand side of (\ref{eq1}) calculates $\Rig(X/K)$ for $h=c$ as in Proposition \ref{vanish rigid}, that the isomorphism in $D_+(K)$ with $\Rig(X/K)$ is independent of choices made, and that it can be done functorially.\\

\textit{The left hand side of (\ref{eq1}) calculates $\Rig(X/K)$:}

We construct an isomorphism in $D_+(K)$. Firstly, from the $N$-truncated simplicial special frame $(Y_{\bullet\leq N},F_{\bullet\leq N})$ we construct a simplicial rigid frame 
$$(X_{\bullet\leq N},Y_{\bullet\leq N},\Qa_{\bullet\leq N})$$
as outlined in Section \ref{special + dagger}. Then, since the construction of the dagger spaces $]Y_m[^\dagger_{\hat{F}_m}$ are functorial in $(Y_m,F_m)$, by a simplicial version of \cite[Theorem 5.1]{Grosse-Klonne2000} we get quasi-isomorphisms
\begin{align}\label{eq Grosse-Klonne}R\Gamma(]X_{\bullet\leq N}[^\dagger_{\Fa_{\bullet\leq N}},\Omega^\bullet_{]X_{\bullet\leq N}[^\dagger_{\Fa_{\bullet\leq N}}})\stackrel{\sim}{\to}R\Gamma(]Y_{\bullet\leq N}[_{\Qa_{\bullet\leq N}},j^\dagger\Omega^\bullet_{]Y_{\bullet\leq N}[_{\Qa_{\bullet\leq N}}}) .\end{align}

Now, we construct a complex which calculates $\Rig(X/K)$ similar to the proof of \cite[Theorem 11.1.1]{Chiarellotto2003}. We take a compactification
$$X\into \overline{X}$$
for some proper $k$-scheme $\overline{X}$. Next, take a finite open affine covering of $\overline{X}$, and let $\overline{U}$ be its disjoint union, and $U:=\overline{U}\times_{\overline{X}} X.$ Since $\overline{U}$ is affine, we may take a closed embedding into some smooth formal $\Wa$-scheme $\mathcal{U}$. Set
$$(U_\bullet,\overline{U}_\bullet,\mathcal{U}_\bullet):=(\cosk_0^X(U),\cosk_0^{\overline{X}}(\overline{U}),\cosk_0^{\Wa}(\mathcal{U})).$$
This is a universal de Rham descendable hypercovering of $(X,\overline{X})$ (in the sense of \cite{Chiarellotto2003}), and thus by independence of choice of compactification and such a hypercovering (see \cite[Proposition 10.4.3, Corollary 10.5.4]{Chiarellotto2003}) we may define 
$$\Rig(X/K):=R\Gamma(]\overline{U}_\bullet[_{\mathcal{U}_\bullet},j^\dagger\Omega^\bullet_{]\overline{U}_\bullet[_{\mathcal{U}_\bullet}}).$$

Next, by Proposition \ref{simplicial compactification} we may construct a proper hypercovering $\overline{X}_\bullet$ over $\overline{X}$ with an open embedding $X_\bullet\into \overline{X}_\bullet$, making $(X_\bullet, \overline{X}_\bullet)$ into a \et ale-proper hypercovering of $(X,\overline{X})$. By \cite[Lemma 7.2.3]{Chiarellotto2003} and both hypercoverings being preserved by base change, we get an \et ale-proper hypercoverings
\begin{align}\label{et-prop}(\cosk_N^X(\sk^X_N(X_\bullet))\times_X U, \cosk_N^{\overline{X}}(\sk_N^{\overline{X}}(\overline{X}_\bullet))\times_{\overline{X}} \overline{U})\to (U,\overline{U}).\end{align}
Then, using \cite[Proposition 11.5.1]{Chiarellotto2003} we may construct an \et ale-proper hypercovering $(V_\bullet,\overline{V}_\bullet,\Va_\bullet)$ of $(U,\overline{U})$ such that $(V_\bullet,\overline{V}_\bullet)$ is a refinement of (\ref{et-prop}).

Let $(V_{\bullet,\bullet},\overline{V}_{\bullet,\bullet},\mathcal{V}_{\bullet,\bullet})$ be 
$$V_{m,n}=\cosk_0^{Y_n}(V_n)_m, \, \overline{V}_{m,n}=\cosk_0^{\overline{Y}_n}(\overline{V}_n)_m, \, \Va_{m,n}=\cosk_0^\Wa(\Va_n\hat{\times}_\Wa \mathcal{U}_n)_m.$$
Defining the simplicial maps as in \cite[Proposition 11.5.4]{Chiarellotto2003} we have 
\begin{itemize}
\item[(*)] $(V_{m,\bullet},\overline{V}_{m,\bullet},\mathcal{V}_{m,\bullet})$ is an \et ale-proper hypercovering of $(U_m,\overline{U}_m,\mathcal{U}_m)$ for any $m$;
\item[(**)] $(V_{\bullet,n},\overline{V}_{\bullet,n},\mathcal{V}_{\bullet,n})$ is a universally de Rham descendable hypercovering of $(Y_n,\overline{Y}_n)$ for any $n$. 
\end{itemize}

Considering the $(\infty,N)$-truncated version $(V_{\bullet,\bullet\leq N},\overline{V}_{\bullet,\bullet\leq N},\mathcal{V}_{\bullet,\bullet\leq N})$, we get a morphism
\begin{align}\label{eq2}\Rig(X/K)=R\Gamma(]\overline{U}_\bullet[_{\mathcal{U}_{\bullet}},j^\dagger\Omega^\bullet_{]\overline{U}_\bullet[_{\mathcal{U}_{\bullet}}})\to R\Gamma(]\overline{V}_{\bullet,\bullet\leq N}[_{\mathcal{V}_{\bullet,\bullet\leq N}},j^\dagger\Omega^\bullet_{]\overline{V}_{\bullet,\bullet\leq N}[_{\mathcal{V}_{\bullet,\bullet\leq N}}}).\end{align}

We claim this induces becomes a quasi-isomorphism upon applying $\tau_{\leq h}$. To see this, compare the spectral sequences
$$E_1^{pq}=H^q(]\overline{U}_p[_{\mathcal{U}_{p}},j^\dagger\Omega^\bullet_{]\overline{U}_p[_{\mathcal{U}_{p}}})\Rightarrow H^{p+q}_{rig}(X/K),$$
$$E_1^{pq}=H^q(]\overline{V}_{p,\bullet\leq N}[_{\mathcal{V}_{p,\bullet\leq N}},j^\dagger\Omega^\bullet_{]\overline{V}_{p,\bullet\leq N}[_{\mathcal{V}_{p,\bullet\leq N}}})\Rightarrow H^{p+q}_{rig}(]\overline{V}_{\bullet,\bullet\leq N}[_{\mathcal{V}_{\bullet,\bullet\leq N}},j^\dagger\Omega^\bullet_{]\overline{V}_{\bullet,\bullet\leq N}[_{\mathcal{V}_{\bullet,\bullet\leq N}}}).$$
and notice that for $q\leq h$, by Lemma \ref{N-trunc},
$$H^q(]\overline{V}_{p,\bullet\leq N}[_{\mathcal{V}_{p,\bullet\leq N}},j^\dagger\Omega^\bullet_{]\overline{V}_{p,\bullet\leq N}[_{\mathcal{V}_{p,\bullet\leq N}}})\cong H^q(]\overline{V}_{p,\bullet}[_{\mathcal{V}_{p,\bullet}},j^\dagger\Omega^\bullet_{]\overline{V}_{p,\bullet}[_{\mathcal{V}_{p,\bullet}}})\cong H^q(]\overline{U}_p[_{\mathcal{U}_{p}},j^\dagger\Omega^\bullet_{]\overline{U}_p[_{\mathcal{U}_{p}}})$$
where the last isomorphism comes from (*). So we get a quasi-isomorphism after applying $\tau_{\leq h}$ to (\ref{eq2}).

By Lemma \ref{N-trunc}, we get
\begin{align}\label{eq3}\tau_{\leq h}\Rig(X/K)\stackrel{\sim}{\to} 
\tau_{\leq h}(R\Gamma(]\overline{V}_{\bullet,\bullet\leq N}[_{\mathcal{V}_{\bullet,\bullet\leq N}},j^\dagger\Omega^\bullet_{]\overline{V}_{\bullet,\bullet\leq N}[_{\mathcal{V}_{\bullet,\bullet\leq N}}})).\end{align}

Now, we want to compare the right hand side of (\ref{eq3}) to $\tau_{\leq h}$ applied to the right hand side of (\ref{eq1}). Let $W_{m,m}:=V_{m,n}$, let $\overline{W}_{m,n}$ be the scheme theoretical closure of $$V_{m,n}=V_{m,n}\times_{X_n} X_n\to \overline{V}_{m,n}\times_k Y_n$$ 
and $\Ra_{m,n}:=\Va_{m,n}\hat{\times}_{\Wa} \Qa_n$ with the obvious maps. This gives maps
\begin{align}\label{eq4}\xymatrix{R\Gamma(]\overline{V}_{\bullet,\bullet\leq N}[_{\mathcal{V}_{\bullet,\bullet\leq N}},j^\dagger\Omega^\bullet_{]\overline{V}_{\bullet,\bullet\leq N}[_{\mathcal{V}_{\bullet,\bullet\leq N}}})\ar[d]\\
R\Gamma(]\overline{W}_{\bullet,\bullet\leq N}[_{\mathcal{R}_{\bullet,\bullet\leq N}},j^\dagger\Omega^\bullet_{]\overline{W}_{\bullet,\bullet\leq N}[_{\mathcal{R}_{\bullet,\bullet\leq N}}})\\ R\Gamma(]Y_{\bullet\leq N}[_{\mathcal{\Qa}_{\bullet\leq N}},j^\dagger\Omega^\bullet_{]Y_{\bullet\leq N}[_{\mathcal{\Qa}_{\bullet\leq N}}})\ar[u]}
\end{align}
which we claim become quasi-isomorphisms once we apply $\tau_{\leq h}$. To see this, consider the induced map on spectral sequences
\begin{align}\label{eq5}\xymatrix{E_1^{pq}=H^q(]\overline{V}_{\bullet,p}[_{\mathcal{V}_{\bullet,p}},j^\dagger\Omega^\bullet_{]\overline{V}_{\bullet,p}[_{\mathcal{V}_{\bullet,p}}})\ar@{=>}[r]\ar[d] &        H^{p+q}(]\overline{V}_{\bullet,\bullet\leq N}[_{\mathcal{V}_{\bullet,\bullet\leq N}},j^\dagger\Omega^\bullet_{]\overline{V}_{\bullet,\bullet\leq N}[_{\mathcal{V}_{\bullet,\bullet\leq N}}})\ar[d]\\
''E_1^{pq}=H^q(]\overline{W}_{\bullet,p}[_{\mathcal{R}_{\bullet,p}},j^\dagger\Omega^\bullet_{]\overline{W}_{\bullet,p}[_{\mathcal{R}_{\bullet,p}}})  \ar@{=>}[r] &           H^{p+q}(]\overline{W}_{\bullet,\bullet\leq N}[_{\mathcal{R}_{\bullet,\bullet\leq N}},j^\dagger\Omega^\bullet_{]\overline{W}_{\bullet,\bullet\leq N}[_{\mathcal{R}_{\bullet,\bullet\leq N}}})\\ 
'E_1^{pq}=H^q(]Y_{p}[_{\mathcal{\Qa}_{p}},j^\dagger\Omega^\bullet_{]Y_{p}[_{\mathcal{\Qa}_{p}}})\ar@{=>}[r]\ar[u] &   H^{p+q}(]Y_{\bullet\leq N}[_{\mathcal{\Qa}_{\bullet\leq N}},j^\dagger\Omega^\bullet_{]Y_{\bullet\leq N}[_{\mathcal{\Qa}_{\bullet\leq N}}})\ar[u]}
\end{align}
for $p\leq N$.

The top left vertical arrow is an isomorphism by the independence of choice of compactification of the $V_{m,p}$ in \cite[Proposition 8.3.5]{Chiarellotto2003}, since for any $p,m$, we have a morphism of rigid frames
$$\xymatrix{V_{m,p}\ar@{=}[d] \ar[r] & \overline{W}_{m,p}\ar[d]\ar[r] & \Va_{m,p}\ar[d]\\
V_{m,p}\ar[r] & \overline{V}_{m,p}\ar[r] & \Pa_{m,p}}$$
with middle and right vertical maps proper and smooth respectively. 

Next, notice that both $E_1^{pq}$ and $'E_1^{pq}$ calculate $H^q_{rig}(Y_p/K)$ by (**), which is finitely generated. Thus, the bottom left vertical map in (\ref{eq5}) must also be an isomorphism.

Summarizing all of the above, we get isomorphisms in $D_+(K)$
\begin{align}\label{comparison maps}\xymatrix{\tau_{\leq h}(\Rig(X/K))\ar[r]^(0.37)\sim_(0.37){(\ref{eq2})} &
\tau_{\leq h}(R\Gamma(]\overline{V}_{\bullet,\bullet\leq N}[_{\mathcal{\mathcal{V}_{\bullet,\bullet\leq N}}_{\bullet\leq N}},j^\dagger\Omega^\bullet))\ar[r]^\sim_{(\ref{eq4})} &
\tau_{\leq h}(R\Gamma(]\overline{W}_{\bullet,\bullet\leq N}[_{\mathcal{\mathcal{R}_{\bullet,\bullet\leq N}}_{\bullet\leq N}},j^\dagger\Omega^\bullet))\\
&&
\tau_{\leq h}(R\Gamma(]Y_{\bullet\leq N}[_{\mathcal{\Qa}_{\bullet\leq N}},j^\dagger\Omega^\bullet))\ar[u]^\sim_{(\ref{eq4})}\\
&&
\tau_{\leq h}(R\Gamma(]X_{\bullet\leq N}[^\dagger_{\mathcal{\Fa}_{\bullet\leq N}},\Omega^\bullet))\ar[d]_\sim^{(\ref{eq1})}\ar[u]^\sim_{(\ref{eq Grosse-Klonne})}\\
\tau_{\leq h}(R\Gamma(X,W^\dagger\Omega^\bullet_{X/k}))\ar[r]^\sim_{(\ref{eq6})} & 
\tau_{\leq h}(R\Gamma(X_{\bullet},W^\dagger\Omega^\bullet_{X_{\bullet/k}})) &
\tau_{\leq h}(R\Gamma(Y_{\bullet\leq N},W^\dagger\Omega^\bullet_{X_{\bullet\leq N/k}}))\ar[l]_\sim^{(\ref{eq6})} }\end{align}
where we have ommitted subscripts in the $j^\dagger\Omega^\bullet$ and $\Omega^\bullet$.

By Theorem \ref{vanish rigid}, there exists a $c$ such that $H^i_{rig}(X/K)=0$ for $i>c$. Varying $h$ (and $N$) we see that 
$$\tau_{\leq c}(\Rig(X/K))\stackrel{\sim}{\to} \Rig(X/K), \,\,\,\, \tau_{\leq c}(R\Gamma(X,W^\dagger\Omega^\bullet_{X/k}))\stackrel{\sim}{\to} R\Gamma(X,W^\dagger\Omega^\bullet_{X/k})$$ 
so we may set $h=c$ in (\ref{comparison maps}) and drop the truncation terms, giving us an isomorphism in $D_+(K)$
$$\Rig(X/K)\cong R\Gamma(X,W^\dagger\Omega^\bullet_{X/k}).$$

\textit{Independence of choices:}

We must prove independence of the choices of the special hypercovering $X_\bullet$, $c$ as in Theorem \ref{vanish rigid}, $N$ satisfying (\ref{N,h}) for $c$, lifting $E_N$ of $X_N$ over $W(k)$ and its immersion into affine space and projective space $\A^r_{W(k)}$ and $P=\P^r_{W(k)}$, the compactification $\overline{X}$, the Zariski covering $\overline{U}$ of $\overline{X}$ and its closed immersion into $\mathcal{U}$, and the refinement $(V_\bullet,\overline{V}_\bullet,\mathcal{V}_\bullet)$ of $(X_\bullet\times_X U, \overline{X}_\bullet\times_{\overline{X}}\overline{U})$ over $(U,\overline{U})$.

\textit{1) Independence of $X_\bullet$, $E_N\into \A^r_W\into P=\P^r_W$:}

Suppose we have two choices 
$$(X_\bullet^i, E^i_N\into \A^{r_i}_W\into P^i=\P^{r_i}_W, \,i=1,2),$$
with all the other choices the same. 

By Proposition \ref{sp hypercovering}, there exists a special hypercovering $X^{12}_\bullet\to X$ refining $X^1_\bullet$ and $X^2_\bullet$. We can choose some lifting $E_{N}^{12}$ of $X_{N}^{12}$ over $W(k)$ fitting into the diagram of special frames 
$$\xymatrix{ & (X^1_N,E^1_N)\\
(X^{12}_N,E^{12}_N)\ar[ur]\ar[dr]\\
& (X^2_N,E^2_N).}$$

This will give us morhpisms in the $N$-truncated simplicial rigid frames $$(X_{\bullet\leq N}^{12},Y_{\bullet\leq N}^{12},\Qa^{12}_{\bullet\leq N})\to (X_{\bullet\leq N}^{i},Y_{\bullet\leq N}^{i},\Qa^{i}_{\bullet\leq N})$$ for $i=1,2$ by functoriality of the $\Gamma_N^{W(k)}(-)$ functor. 

Next, by Proposition \ref{simplicial compactification}.ii) (with the argument slightly modified to involve a triple fiber product) we may form a proper \et ale hypercovering $(X_{\bullet},\overline{X}^{12}_\bullet)$ of $(X,\overline{X})$ with maps to $(X_\bullet,\overline{X}^i\bullet)$ for $i=1,2$. 

Then, by the proof of \cite[Proposition 11.5.2]{Chiarellotto2003} we may choose refinements $(V'_\bullet,\overline{V}'_\bullet,\Va'_\bullet)$ and $(V_\bullet'',\overline{V}''_\bullet,\Va_\bullet'')$ of $(U\times_X X^{12},\overline{U}\times_{\overline{X}} \overline{X}^{12})$ over $(U,\overline{U})$ fitting into diagrams
$$\xymatrix{(V'_\bullet,\overline{V}'_\bullet,\Va'_\bullet)\ar[d]\ar[r] & (V^1_\bullet,\overline{V^1}_\bullet,\Va^1_\bullet)\ar[d]\\ 
(U\times_X\cosk_N^X(X^{12}_{\bullet\leq N}),\overline{U}\times_{\overline{X}}\cosk_N^{\overline{X}}(\overline{X}^{12}_{\bullet\leq N}))\ar[r] & (U\times_X \cosk_N^X(X^{1}_{\bullet\leq N}),\overline{U}\times_{\overline{X}}\cosk_N^{\overline{X}}(\overline{X}^{1}_{\bullet\leq N}))\\
(V''_\bullet,\overline{V}''_\bullet,\Va''_\bullet)\ar[d]\ar[r] & (V^2_\bullet,\overline{V^2}_\bullet,\Va^2_\bullet)\ar[d]\\
(U\times_X\cosk^{X}_N(X^{12}_{\bullet\leq N}),\overline{U}\times_{\overline{X}}\cosk_N^{\overline{X}}(\overline{X}^{12}_{\bullet\leq N}))\ar[r] & (U\times_X \cosk_N^{X}(X^{2}_{\bullet\leq N}),\overline{U}\times_{\overline{X}}\cosk_N^{\overline{X}}(\overline{X}^{2}_{\bullet\leq N}))}$$
where the vertical maps are only seen as morphisms of pairs. Taking $(V^{12}_\bullet,\overline{V}^{12}_\bullet,\Va^{12}_\bullet)$ to be the fiber product of these two refinements, we get by \cite[Proposition 11.5.1.(2)]{Chiarellotto2003} that this is also a refinements of $(U\times_X X^{12},\overline{U}\times_{\overline{X}} \overline{X}^{12})$ over $(U,\overline{U})$, with maps to  $(V^{i}_\bullet,\overline{V}^{i}_\bullet,\Va^{i}_\bullet)$ for $i=1,2$ compatible with the other maps. 

Next, having taking closures $\overline{W}^i_{m,n}$ of $V^i_{m,n}$ in $\overline{V}^i_{m,n}\times_k Y^i_n$ with closed immersion into $\Ra^i_{m,n}:=\Va^i_{m,n}\hat{\times}_{\Wa} \Qa^{i}_{n}$ for all $m$, $n\leq N$, and $i=1,2,12$. We have the diagram
$$\xymatrix{V^1_{m,n}\ar[r]& \overline{W}^1_{m,n}\ar[r]& \overline{V}^1_{m,n}\times_k Y_n^{1}\\
V^{12}_{m,n}\ar[r]\ar[u]\ar[d]& \overline{W}^{12}_{m,n}\ar[r]& \overline{V}^{12}_{m,n}\times_k Y_n^{12}\ar[u]\ar[d]\\
V^2_{m,n}\ar[r]& \overline{W}^2_{m,n}\ar[r]& \overline{V}^2_{m,n}\times_k Y_n^{2}}$$
which by universal property of closures of a map, gives a factorization
$$V^{12}_{m,n}\to \overline{W}^{12}_{m,n}\to ((\overline{V}^{12}_{m,n}\times_k Y_n^{12})\times_{(\overline{V}^{1}_{m,n}\times_k Y_n^{1})} \overline{W}^1_{m,n})\times_{(\overline{V}^{2}_{m,n}\times_k Y_n^{2})} \overline{W}^{12}_{m,n} \to V^{12}_{m,n}\times_k Y^{12}_{n}.$$

This gives us a diagaram of closed immersions into smooth formal schemes
$$\xymatrix{ & (\overline{W}^{1}_{m,n},\Ra^1_{m,n})\\
(\overline{W}^{12}_{m,n},\Ra^{12}_{m,n})\ar[ur]\ar[dr]\\
& (\overline{W}^{2}_{m,n},\Ra^2_{m,n}).}$$

Then, we have maps from the $i=1,2$ versions of (\ref{comparison maps}) to a common one with superscript $12$, where all maps are quasi-isomorphisms.

\textit{2) Independence of $\overline{X}, \overline{U}, \mathcal{U}$:}

Given choices of $\overline{X}^i, \overline{U}^i$ and $\mathcal{U}^i$ for $i=1,2$, fix the other choices. We may let $\overline{X}^{12}$ be the closure of $X$ in $\overline{X}^1\times_k \overline{X}^2$. Then, take an open affine covering of $\overline{X}^{12}$ such that each affine is projected into one of the opens in $\overline{X}^i$ giving $\overline{U}^i$ for $i=1,2$, so that their disjoint union $\overline{U}^{12}$ has compatible morphisms to $\overline{U}^i$ for $i=1,2$. Set $U^{12}=\overline{U}^{12}\times_{\overline{X}^{12}} X$.

Now, having refinements $(V^i_\bullet,\overline{V}^i_\bullet,\Va^i_\bullet)$ of $(X_\bullet\times_X U^i, \overline{X}_\bullet\times_{\overline{X}^i} \overline{U}^i)$ over $(U^i,\overline{U}^i)$ for $i=1,2$, we use the same argument used above to construct $(V^{12}_\bullet,\overline{V}^{12}_\bullet,\Va^{12}_\bullet)$ from $(V^{'}_\bullet,\overline{V}^{'}_\bullet,\Va^{'}_\bullet)$ and $(V^{''}_\bullet,\overline{V}^{''}_\bullet,\Va^{''}_\bullet)$, which also gives us $(V^{12}_{\bullet,\bullet},\overline{V}^{12}_{\bullet,\bullet},\Va^{12}_{\bullet,\bullet})$. We also use the same argument to obtain $\overline{W}^{12}_{\bullet,\bullet\leq N}$, $\Ra^{12}_{\bullet,\bullet\leq N}$. As before, this gives independence of these choices.

\textit{3) Independence of $c,N$:}

The independence of $c$ is clear. Suppose we have some $N^1\leq N^2$ satisfying (\ref{N,h}). Then, given choices of embeddings $X_{N_i}\into E_{N_i}\into \A^{r_i}_W\into \P^{r_1}_W$ for $i=1,2$, we will get $Y^i_{\bullet\leq N^i}, \Qa^i_{\bullet\leq N^i}, \overline{W}^i_{\bullet,\bullet\leq N^i}$ and $\Ra^i_{\bullet,\bullet\leq N^i}$, with all the other choices being the same. But since we are applying $\tau_{\leq c}$, we may replace the $N^2$ truncations by $N^1$ truncations through a natural quasi-isomorphism by Lemma \ref{N-trunc}. Thus, it is reduced to case 1).

\textit{Functoriality:} 
Given $f:X\to X'$ of smooth schemes, we must choose as above in a compatible way. Firstly, we may choose compatible special hypercoverings $X_\bullet\to X$ and $X'_\bullet\to X'$ by Proposition \ref{sp hypercovering}.iii).

By Proposition \ref{simplicial compactification} (and its proof), we can always pick compatible compactifications $X\into \overline{X}$ and $X'\into \overline{X}'$ and find simplicial compactifications $\overline{X}_\bullet$ and $\overline{X}'_\bullet$ over $\overline{X}$ and $\overline{X}'$ fitting into a commutative diagram
$$\xymatrix{(X_\bullet,\overline{X}_\bullet)\ar[r]\ar[d] & (X'_\bullet,\overline{X}'_\bullet)\ar[d]\\
(X,\overline{X})\ar[r]& (X',\overline{X}').}$$

After choosing $\overline{U}'$ and $\mathcal{U}'$, we may choose a compatible affine covering of $\overline{X}$ to give $\overline{U}$, and a closed embedding into some $\mathcal{U}$ compatible. 

Fix $N$ as in (\ref{N,h}) for  some $h=c$ satisfying Theorem \ref{vanish rigid} for both $X$ and $X'$. After choosing any lifting $E'_N$ of $X'_N$ and an embedding into affine and projective space, we may pick lifts of $E_N$ and embeddings compatible as explained in the remark after introducing standard smooth algebras. This will give compatible morphisms
$$]X_{\bullet\leq N}[^\dagger_{\Fa_{\bullet\leq N}}\to ]X'_{\bullet\leq N}[^\dagger_{\Fa'_{\bullet\leq N}},$$
$$(X_{\bullet\leq N},Y_{\bullet\leq N},\Qa_{\bullet\leq N})\to (X'_{\bullet\leq N},Y'_{\bullet\leq N},\Qa'_{\bullet\leq N})$$
of $N$-truncated dagger spaces and rigid frames respectively.

By \cite[Proposition 11.5.2]{Chiarellotto2003}, we may also pick compatible refinements $V_{\bullet}$ and $V'_\bullet$ of 
$$(\cosk^X_N(\sk^X_N(X_\bullet))\times_X U, \cosk^{\overline{X}}_N(\sk^{\overline{X}}_N(\overline{X}_\bullet))\times_{\overline{X}}\overline{U})\text { and }$$
$$(\cosk^{X'}_N(\sk^{X'}_N(X'_\bullet))\times_{X'} U', \cosk^{\overline{X}'}_N(\sk^{\overline{X}'}_N(\overline{X}'_\bullet))\times_{\overline{X}'}\overline{U}')$$ over $(U,\overline{U})$ and $(U',\overline{U}')$ respectively.

Finally, when choosing $W'_{m,n}$ to be the closure of $V'_{m,n}$ in $\overline{V}'_{m,n}\times_k Y'_n$, from the diagram
$$\xymatrix{V_{m,n}\ar[d]\ar[rr] && \overline{V}_{m,n}\times_k Y_n\ar[d]\\
V'_{m,n}\ar[r] & W'_{m,n}\ar[r] & \overline{V}'_{m,n}\times_k Y'_n}$$
by the universal property of the schematic closure, we get a  closed immersion
$$W_{m,n}\into W'_{m,n}\times_{(\overline{V}'_{m,n}\times_k Y'_n)}(\overline{V}_{m,n}\times_k Y_n)$$
where $W_{m,n}$ is the closure of $V_{m,n}$ in $\overline{V}_{m,n}\times_k Y_n$. This gives compatible morphisms from the $X'$ version of (\ref{comparison maps}) to the $X$ version.
\end{proof}

\section{Application}

As an application, we consider the following $p$-adic \et ale motivic cohomology on smooth $k$-varieties (generalized in \cite[Appendix B]{Flach2016} to general $k$-varieties):
$$R\Gamma_c(X_{\et},\Z_p(n)):=R\text{lim}_\bullet R\Gamma_c(X_{\et},\\Z(n)/p^\bullet)$$
and
$$R\Gamma_c(X_{\et},\Q_p(n)):=R\Gamma_c(X_{\et},\Z_p(n))_\Q.$$
Here $\Z(n)$ is Suslin-Voevodsky's motivic complex defined in \cite[Definition 3.1]{Suslin2000} on the big category of smooth $k$-schemes $\Sm/k$. However, since we will be interested in a p-adic completion of this cohomology, we will use the identification
\begin{align}\label{motivic log}\Z/p^r(n)\cong W_r\Omega^n_{log}[-n]\end{align}
on $\Sm/k$ from \cite[Theorem 8.5]{Geisser2000b}, where $W_r\Omega^n_{log}$ (denoted $\nu^n_r$ there) is the subsheaf of $W_r\Omega^n$ \et ale locally generated by sections of the forms $\dlog f_1...\dlog f_n$, defined in \cite[II.5.7]{Illusie1979}.

As an application of Theorem \ref{comparison theorem}, we have the following result, which proves \cite[Conjecture 7.16.b]{Flach2016} for smooth schemes (using \cite[Theorem 4.3]{GeisserArithmeticcohomologyfinite2006a}):

\begin{thm}
For a separated, finite type smooth $k$-scheme $X$, and $n\in \Z$, there exists a quasi-isomorphism
$$R\Gamma(X_{\et},\Q_p(n))\stackrel{\sim}{\to}\left[R\Gamma_{rig,c}(X/K)^*\stackrel{\phi-p^{n-d}}{\longrightarrow}R\Gamma_{rig,c}(X/K)^*\right][-2d].$$
\end{thm}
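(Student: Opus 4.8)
The plan is to combine three ingredients. The first is the Geisser--Levine identification $\Z/p^r(n)\cong W_r\Omega^n_{log}[-n]$ of (\ref{motivic log}); the second is Geisser's description, for a smooth variety over a finite field, of $p$-adic \et ale (motivic) cohomology as a Frobenius fixed point complex built from de Rham--Witt cohomology \cite[Theorem 4.3]{GeisserArithmeticcohomologyfinite2006a}; and the third is Theorem \ref{comparison theorem}, whose role here is precisely to allow one to replace the naive de Rham--Witt cohomology occurring in \textit{loc.\ cit.} (which for non-proper $X$ is crystalline, hence ``wrong'') by \emph{rigid} cohomology --- the theory with the correct behaviour in the non-proper case and the one carrying the Poincar\'e--Lefschetz duality needed to bring the right-hand side into the stated shape.

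I would first rewrite the target. Set $d=\dim X$. Poincar\'e duality in rigid cohomology gives a Frobenius-equivariant identification $R\Gamma_{rig,c}(X/K)^*[-2d]\simeq\Rig(X/K)(d)$, the Tate twist accounting for the scalar $p^d$ coming from the trace isomorphism $H^{2d}_{rig,c}(X/K)\cong K(-d)$. Transporting the two-term complex $\bigl[R\Gamma_{rig,c}(X/K)^*\xrightarrow{\,\phi-p^{n-d}\,}R\Gamma_{rig,c}(X/K)^*\bigr][-2d]$ through this identification, and absorbing the twist $(d)$ and the shift $[-2d]$, turns it into $\bigl[\Rig(X/K)\xrightarrow{\,1-\phi/p^{n}\,}\Rig(X/K)\bigr]$ up to the globally invertible scalar $p^{n}$, where $\phi$ denotes the crystalline ($q$-power) Frobenius. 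Thus it is enough to construct a natural quasi-isomorphism
\[
R\Gamma(X_{\et},\Q_p(n))\ \xrightarrow{\ \sim\ }\ \bigl[\,\Rig(X/K)\xrightarrow{\,1-\phi/p^{n}\,}\Rig(X/K)\,\bigr].
\]

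Next I would compute the left-hand side in de Rham--Witt terms. By (\ref{motivic log}), $R\Gamma(X_{\et},\Z/p^r(n))\simeq R\Gamma(X_{\et},W_r\Omega^n_{log})[-n]$, and $W_r\Omega^n_{log}$ fits into the Artin--Schreier type exact sequence of \et ale sheaves realising it as a Frobenius fixed part of the (truncated) de Rham--Witt complex (\cite[II.5.7]{Illusie1979} and the standard consequences used in \cite{Geisser2000b}, \cite{GeisserArithmeticcohomologyfinite2006a}). Since each quasi-coherent sheaf $W_r\Omega^j$ has the same cohomology on $X_{\et}$ and on $X_{\mathrm{Zar}}$, taking $R\varprojlim_r$ and $\otimes\Q$ produces
\[
R\Gamma(X_{\et},\Q_p(n))\ \simeq\ \bigl[\,R\Gamma(X,W\Omega^\bullet_{X/k})\otimes\Q\xrightarrow{\,1-\phi/p^{n}\,}R\Gamma(X,W\Omega^\bullet_{X/k})\otimes\Q\,\bigr].
\]
Now comes the point where overconvergence enters: the sheaves $W_r\Omega^n_{log}$ are, \et ale-locally, generated by products $d\log f_1\cdots d\log f_n$ of units, which already lie in the overconvergent complex, so $W_r\Omega^n_{log}$ lives inside $W^\dagger\Omega^\bullet_{X/k}$ and, more importantly, the $(1-\phi/p^{n})$-fixed point complex above is unchanged if $W\Omega^\bullet_{X/k}$ is replaced throughout by $W^\dagger\Omega^\bullet_{X/k}$. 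Granting this, Theorem \ref{comparison theorem}, which identifies $R\Gamma(X,W^\dagger\Omega^\bullet_{X/k})\otimes\Q$ with $\Rig(X/K)$ compatibly with Frobenius, turns the right-hand side into $\bigl[\Rig(X/K)\xrightarrow{\,1-\phi/p^{n}\,}\Rig(X/K)\bigr]$, finishing the argument. Functoriality in $X$ is inherited from the functoriality of (\ref{motivic log}), of Geisser's comparison, and of Theorem \ref{comparison theorem}.

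I expect the main obstacle to be exactly the overconvergence step: one must prove that passing from $W\Omega^\bullet_{X/k}$ to $W^\dagger\Omega^\bullet_{X/k}$ leaves the $1-\phi/p^{n}$ fixed point complex unchanged, i.e. that $1-\phi/p^{n}$ acts invertibly on the ``non-overconvergent part'' (equivalently, on the cone of $W^\dagger\Omega^\bullet_{X/k}\otimes\Q\to W\Omega^\bullet_{X/k}\otimes\Q$). This is where the normalisation of the Frobenius by $p^{n}$ is essential and where one needs control of $\phi$ relative to the slope/weight structure of the de Rham--Witt complex, and where one must check that $R\varprojlim_r$, $-\otimes\Q$ and the formation of homotopy fibres commute (the finiteness $\dim_K H^i_{rig}(X/K)<\infty$ and the vanishing of Theorem \ref{vanish rigid} keep the inverse limits under control). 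A secondary, purely bookkeeping, difficulty is to match the normalisation ``$\phi-p^{n-d}$ on $R\Gamma_{rig,c}(X/K)^*$'' of the statement with ``$1-\phi/p^{n}$ on $\Rig(X/K)$'' through Poincar\'e duality and the Tate twist $(d)$; and one should record explicitly that the quasi-isomorphism of Theorem \ref{comparison theorem}, built from the map (\ref{dagger to oWdR}) of \cite{Davis2011}, is compatible with the two Frobenius actions, a compatibility used implicitly above.
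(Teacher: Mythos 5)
Your overall skeleton (Geisser--Levine identification, de Rham--Witt Frobenius fixed-point complex, Theorem \ref{comparison theorem}, then Poincar\'e duality to bring in compactly supported cohomology) matches the paper, and your reformulation of the target as $\bigl[\Rig(X/K)\xrightarrow{p^n-\phi}\Rig(X/K)\bigr]$ is exactly the intermediate statement (\ref{mot to rig}). But the decisive step --- replacing the full de Rham--Witt complex by the overconvergent one --- is left as an acknowledged ``main obstacle'' rather than proved, and the route you propose for it (show that $1-\phi/p^n$ acts invertibly on the cone of $W^\dagger\Omega^\bullet_{X}\otimes\Q\to W\Omega^\bullet_{X}\otimes\Q$) is not how the paper proceeds and is not established anywhere; for open $X$ the target $R\Gamma(X,W\Omega^\bullet_X)\otimes\Q$ is crystalline-type and can be enormous, and no slope control of this cone is available to you. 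The paper avoids this entirely by never leaving cohomological degree $n$ until after overconvergence: it uses Illusie's exact sequence of \'etale sheaves $0\to W_\bullet\Omega^n_{X,\log}\to W_\bullet\Omega^n_X\xrightarrow{1-F}W_\bullet\Omega^n_X\to 0$ together with the specific input you are missing, namely \cite[Corollary 2.4.12]{Ertl2013}: logarithmic forms lie in $W^\dagger\Omega^n_X$ and $1-F$ remains surjective on the overconvergent subsheaf. This yields two compatible short exact sequences with the same kernel $W\Omega^n_{X,\log}$, so the two cones $[W\Omega^n_X\xrightarrow{1-F}W\Omega^n_X]$ and $[W^\dagger\Omega^n_X\xrightarrow{1-F}W^\dagger\Omega^n_X]$ agree for free --- no invertibility statement on a non-overconvergent remainder is needed.

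A related ordering problem: you pass at the outset from $R\Gamma(X_{\et},W_\bullet\Omega^n_{\log})[-n]$ to a $1-\phi/p^n$ fixed-point complex on the \emph{whole} complex $W\Omega^\bullet_X\otimes\Q$. That passage is itself the slope-decomposition argument, and the paper performs it only \emph{after} the overconvergence step, where the comparison with $\Rig(X/K)$ (finite-dimensional, with the slope decomposition of \cite[Corollaire 5.3]{Illusie1979} identifying the slope-$n$ part with the degree-$n$ piece) makes it legitimate; carried out on $W\Omega^\bullet_X$ for non-proper $X$ it is unjustified. So as written the proposal has a genuine gap at exactly the point you flag: you would need either Ertl's surjectivity result applied degree-wise as in the paper, or a new argument controlling $1-\phi/p^n$ on the non-overconvergent part, which the paper neither needs nor provides. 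The final duality step and the bookkeeping $\phi-p^{n-d}$ versus $p^n-\phi$ are fine and coincide with the paper's use of \cite{Berthelot1997a}.
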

Here, $\phi$ is the Frobenius on rigid cohomology, $R\Gamma_{rig,c}(X/K_0)^*:=R\text{Hom}(R\Gamma_{rig,c}(X/K_0),K)$ and $[A\to B]:=\text{Cone}(A\to B)[-1]$.
\begin{proof}
Using (\ref{motivic log}) we have that
$$\Rig(X,\Q_p(n))\cong  R\t{lim}R\Gamma(X_{et},W_\bullet\Omega^n_{X,\t{log}})_\Q[-n].$$
By \cite[I.Theorem 5.7.2.]{Illusie1979} we have a short exact sequence 
$$0\to W_\bullet\Omega^n_{X,\t{log}}\to W_\bullet\Omega^n_{X}\stackrel{1-F}{\to} W_\bullet\Omega^n_{X}\to 0$$
in $X_{et}$ , and by the proof of \cite[II.Proposition 2.1.]{Illusie1979},
$$W\Omega^n_X\cong R\t{lim}W_\bullet\Omega^n_X$$
which gives us
\begin{align}\label{rigid to cone}\Rig(X,\Q_p(n))\cong \left[R\Gamma(X_{et},W\Omega^n_{X})\stackrel{1-F}{\to}R\Gamma(X_{et},W\Omega^n_{X})\right]_\Q[-n].\end{align}

Next, by \cite[Corollary 2.4.12]{Ertl2013}, we have that all logarithmic Witt de-Rham sections are overconvergent, and that $1-F$ is still surjective when restricted to the overconvergent part; so we have a commutative diagram $X_{et}$ given by
$$\xymatrix{0\ar[r] & W\Omega^n_{X,\t{log}}\ar@{=}[d]\ar[r] &  W^\dagger\Omega^n_{X}\ar@{^{(}->}[d]\ar[r]^{1-F} &  W^\dagger\Omega^n_{X}\ar@{^{(}->}[d]\ar[r] & 0\\
0\ar[r] & W\Omega^n_{X,\t{log}}\ar[r] &  W\Omega^n_{X}\ar[r]^{1-F} &  W^\dagger\Omega^n_{X}\ar[r] & 0}$$
where the vertical arrows are given by inclusion, and both rows are short exact sequences. Thus, we get a natural quasi-isomorphism
\begin{align}\label{over to cone}\left[R\Gamma(X_{et},W\Omega^n_{X})\stackrel{1-F}{\to}R\Gamma(X_{et},W\Omega^n_{X})\right]_\Q[-n]\cong \left[R\Gamma(X_{et},W^\dagger\Omega^n_{X})\stackrel{1-F}{\to}R\Gamma(X_{et},W^\dagger\Omega^n_{X})\right]_\Q[-n].\end{align}

We consider the the Frobenius on $W^\dagger\Omega^\bullet_X$ by restricting that on $W\Omega^\bullet_X$. By the slope decomposition of $F$-crystals in \cite[Corollaire 5.3]{Illusie1979}
$$R\Gamma(X,W\Omega^\bullet_{X/k})\otimes_{W(k)}K\cong \bigoplus_{i=0} R\Gamma(X,W\Omega^i_{X/k})[-i]\otimes_{W(k)}K$$
we see the part with slope $p^n$ must be coming from $R\Gamma(X,W^\dagger\Omega^n)[-n]$, thus giving
\begin{align}\label{slope part}\left[R\Gamma(X_{et},W^\dagger\Omega^n_{X})\stackrel{1-F}{\to}R\Gamma(X_{et},W^\dagger\Omega^n_{X})\right]_\Q[-n]\cong \left[R\Gamma(X_{et},W^\dagger\Omega^\bullet_{X})\stackrel{p^n-\phi}{\to}R\Gamma(X_{et},W^\dagger\Omega^\bullet_{X})\right]_\Q.\end{align}

Then, by Theorem \ref{comparison theorem} we get that
$$\Rig(X/K)\stackrel{\sim}{\to} R\Gamma(X,W^\dagger\Omega^\bullet_{X/k})_\Q$$
so by (\ref{rigid to cone}), (\ref{over to cone}) and (\ref{slope part}) we have
\begin{align}\label{mot to rig}R\Gamma(X,\Q_p(n))\cong \left[\Rig(X/K) \stackrel{p^n-\phi}{\to} \Rig(X/K)\right].\end{align}

Finally, from \cite[Th\'{e}or\`{e}me 2.4]{Berthelot1997a} we can use Poincar\'{e} duality for rigid cohomology to get non-degenerate pairings
$$H^i_{\t{rig}}(X/K)\times H_{\t{rig,c}}^{2d-i}(X/K)\to H^{2d}_{\t{rig,c}}(X/K)\stackrel{\sim}{\to} K(-d)$$
compatible as F-crystals, where $K(-d)$ is viewed as $K$ with a Frobenius action given by multiplication by $p^d$. Thus, we have a natural quasi-isomorphism
$$\Rig(X/K)\stackrel{\sim}{\to}R\Gamma_{\t{rig,c}}(X/K)^*[-2d]:=\t{RHom}(\t{R}\Gamma_{\t{rig,c}}(X/K),K)[-2d]$$
and therefore,
\begin{align*}\Rig(X,\Q_p(n))\cong & \left[\Rig(X/K) \stackrel{p^n-\phi}{\to} \Rig(X/K)\right]\\
& \cong \left[\t{R}\Gamma_{\t{rig,c}}(X/K)^* \stackrel{p^{d-n}-\phi}{\to} \t{R}\Gamma_{\t{rig,c}}(X/K)^*\right][-2d].\end{align*}
\end{proof}

\bibliography{A_comparison_of_overconvergent_Witt_de-Rham_cohomology_and_rigid_cohomology_on_smooth_schemes}

\bibliographystyle{alpha}

\end{document}